\newcommand{\real}{\mathbb{R}}
\newcommand{\tr}{\top}
\DeclareMathOperator*{\argmax}{arg\,max}
\DeclareMathOperator*{\argmin}{arg\,min}
\newtheorem{assumption}{Assumption}
\def\MCP{\text{MCP}}
\def\PATH{\textsc{Path}}
\def\SOL{\text{SOL}}
\def\QVI{\text{QVI}}
\def\VI{\text{VI}}
\newcolumntype{L}[1]{>{\raggedright\let\newline\\\arraybackslash\hspace{0pt}}m{#1}}
\newcolumntype{C}[1]{>{\centering\let\newline\\\arraybackslash\hspace{0pt}}m{#1}}
\newcolumntype{R}[1]{>{\raggedleft\let\newline\\\arraybackslash\hspace{0pt}}m{#1}}
\title{Solving equilibrium problems using extended mathematical programming}
\author{Youngdae~Kim \and Michael~C.~Ferris}
\institute{Y. Kim $\cdot$ M.C. Ferris \at Department of Computer Sciences and
  Wisconsin Institute for Discovery,
  University of Wisconsin-Madison, 1210 West Dayton St., Madison, WI, 53706\\
  \and
  Y. Kim \\
  \email{youngdae@cs.wisc.edu}
  \\
  M.C. Ferris \\
  \email{ferris@cs.wisc.edu}
}
\date{Received: date / Accepted: date}
\lstdefinelanguage{GAMS}{
morekeywords={
ABORT , ACRONYM , ACRONYMS , ALIAS , ALL , AND , ASSIGN , BINARY , CARD , DISPLAY , EPS , EQ , EQUATION , EQUATIONS , GE , GT , INF , INTEGER , LE , LOOP , LT , MAXIMIZING , MINIMIZING , MODEL , MODELS , NA , NE , NEGATIVE , NOT , OPTION , OPTIONS , OR , ORD , PARAMETER , PARAMETERS , POSITIVE , PROD , SCALAR , SCALARS , SET , SETS , SMAX , SMIN , SOS1 , SOS2 , SUM , SYSTEM , TABLE , USING , VARIABLE , VARIABLES , XOR , YES , REPEAT , UNTIL , WHILE , IF , THEN , ELSE , SEMICONT , SEMIINT , FILE , FILES , PUT , PUTPAGE , PUTTL , PUTCLOSE , FREE , NO , SOLVE , FOR , ELSEIF , ABS , ARCTAN , CEIL , COS , ERROR , EXP , FLOOR , LOG , LOG10 , MAP , MAPVAL , MAX , MIN , MOD , NORMAL , POWER , ROUND , SIGN , SIN , SQR , SQRT , TRUNC , UNIFORM , LO , UP , FX , SCALE , PRIOR , PC , PS , PW , TM , BM , CASE , DATE , IFILE , OFILE , PAGE , RDATE , RFILE , RTIME , SFILE , TIME , TITLE , TS , TL , TE , TF , LJ , NJ , SJ , TJ , LW , NW , SW , TW , ND , NR , NZ , CC , HDCC , TLCC , LL , HDLL , TLLL , LP , WS , /,PROD: },
sensitive = false,
morecomment=[f][\color{red}]*,%
morecomment=[s]{$ontext}{$offtext},
% morecomment=[s][\color{green}]{/}{/},
morestring=[b]'',
morestring=[b]',
}
\begin{document}
\maketitle

\begin{abstract}
  We introduce an extended mathematical programming framework for specifying
  equilibrium problems and their variational representations, such as
  generalized Nash equilibrium, multiple optimization problems with
  equilibrium constraints, and (quasi-) variational inequalities, and
  computing solutions of them from modeling languages.
  We define a new set of constructs with which users annotate variables and
  equations of the model to describe equilibrium and variational problems.
  Our constructs enable a natural translation of the model from one
  formulation to another more computationally tractable form without requiring
  the modeler to supply derivatives.
  In the context of many independent agents in the equilibrium, we facilitate
  expression of sophisticated structures such as shared constraints and
  additional constraints on their solutions.
  We define a new concept, shared variables, and demonstrate its uses for
  sparse reformulation, equilibrium problems with equilibrium constraints,
  mixed pricing behavior of agents, and so on.
  We give some equilibrium and variational examples from the literature and
  describe how to formulate them using our framework.
  Experimental results comparing performance of various complementarity
  formulations for shared variables are given.
  Our framework has been implemented and is available within GAMS/EMP.
\end{abstract}

\section{Introduction}
\label{sec:intro}

In this paper, we present an extended mathematical programming (EMP)
framework for specifying equilibrium problems and their variational
representations and computing solutions of them in modeling languages such as
AMPL, GAMS, or Julia~\cite{bezanson17,brook88,fourer02}.
Equilibrium problems of interest are (generalized) Nash equilibrium problems
(GNEP) and multiple optimization problems with equilibrium constraints
(MOPEC), and we consider quasi-variational inequalities (QVI) in their
variational forms.
All of these problems have been used extensively in the literature, see for
example~\cite{britz13,facchinei10,harker91,philpott16}.
%Later in this paper, we show that some forms of equilibrium problems with
%equilibrium constraints (EPEC) that are reducible to either (G)NEP or MOPEC
%can also be modeled and solved in this framework without applying the
%explicit reduction.

The GNEP is a Nash game between agents with non-disjoint strategy sets.
For a given number of agents $N$, $x^*=(x_1^*,\dots,x_N^*)$ is a solution to
the GNEP if it satisfies
\begin{equation}
  \begin{aligned}
    x_i^* \in \argmin_{x_i \in K_i(x^*_{-i}) \subset \real^{n_i}} &&
    f_i(x_i,x^*_{-i}), && \text{for} \quad i=1,\dots,N,
  \end{aligned}
  \tag{GNEP}
  \label{eq:gnep}
\end{equation}
where $f_i(x_i,x_{-i})$ is the objective function of agent $i$, and
$K_i(x_{-i})$ is its feasible region.
Note that the objective function and the feasible region of each agent are
affected by the decisions of other agents, denoted by
$x_{-i}=(x_1,\dots,x_{i-1},x_{i+1},\dots,x_N)$.
If each agent's feasible region is independent of other agents' decisions,
that is, $K_i(x_{-i}) \equiv K_i$ for some nonempty set $K_i$, then the
problem is called a Nash equilibrium problem (NEP).

In addition to the GNEP or NEP setting, if we have an agent formulating some
equilibrium conditions, such as market clearing conditions, as a variational
inequality (\VI{}) whose definition is given in
Section~\ref{sec:preliminaries}, we call the problem multiple optimization
problems with equilibrium constraints (MOPEC).
For example, $x^*=(x_1^*,\dots,x_N^*,x_{N+1}^*)$ is a solution to the MOPEC if
it satisfies
\begin{equation}
  \begin{aligned}
    x_i^* &\in \argmin_{x_i \in K_i(x^*_{-i}) \subset \real^{n_i}} \quad
    f_i(x_i,x^*_{-i}), \quad \text{for} \quad i=1,\dots,N,\\
    x_{N+1}^* &\in \SOL(K_{N+1}(x_{-(N+1)}^*),F(\cdot,x_{-(N+1)}^*)),
  \end{aligned}
  \tag{MOPEC}
  \label{eq:mopec}
\end{equation}
where $\SOL(K,F)$ denotes the solution set of a variational inequality
$\VI(K,F)$, assuming that
$K_{N+1}(x_{-(N+1)})$ is a nonempty closed convex set for each given
$x_{-(N+1)}$ and $F(x)$ is a continuous function.
We call agent $i$ for $i=1,\dots,N$ an optimization agent and agent $(N+1)$
an equilibrium agent.

Solutions of these problems using modeling languages are usually obtained by
transforming the problem into their equivalent complementarity form, such as
a mixed complementarity problem (\MCP{}), and then solving the complementarity
problem using a specialized solver, for example \PATH{}~\cite{dirkse95}.
This implies that users need to compute the Karush-Kuhn-Tucker (KKT)
conditions of each optimization agent by hand and then manually specify the
complementarity relationships within modeling
languages~\cite{ferris99-siam,rutherford95}.
Similar transformations are needed to formulate equilibrium problems in
their variational forms represented by QVIs as we show in
Section~\ref{sec:preliminaries}.

This approach has several drawbacks.
It is time-consuming and error-prone because of the derivative computation.
The problem structure becomes lost once it is converted into the
complementarity form: it is difficult to tell what the original model is and
which agent controls what variables and equations (functions and constraints)
by just reading the complementarity form.
For QVI formulations, we lose the information about what variables are used as
parameters to define the feasible region.
All variables and equations are endogenous in that form.
This may restrict opportunities for back-end solvers to detect and make
full use of the problem structure.
Modifying the model such as adding/removing variables and equations may not
be easy: it typically involves a lot of derivative recomputation.

For more intuitive and efficient equilibrium programming, that is, formulating
GNEP, MOPEC, or QVI in modeling languages, the paper~\cite{ferris09} briefly
mentioned that the EMP framework can be used to specify GNEPs and MOPECs.
The goal of EMP is to enable users to focus on the problem description itself
rather than spending time and checking errors on the complementarity form derivation.
Users annotate variables and equations of the problem in a similar way to
its algebraic representation and write them into an \texttt{empinfo} file.
The modeling language reads that file to identify high level structure of
the problem such as the number of agents and agent's ownership of variables
and equations.
It then automatically constructs the corresponding complementarity form and
solves it using complementarity solvers.
However, neither detailed explanations about its underlying assumptions and
how to use it are given, nor are the QVI formulations considered
in~\cite{ferris09}.

In this paper, we present detailed explanation of the existing EMP framework
for equilibrium programming for the first time.
We also describe its extensions to incorporate some new sophisticated
structures, such as {\it shared constraints}, {\it shared variables}, and
{\it QVI formulations}, and their implications with examples from the
literature.
Our extensions allow a natural translation of the algebraic formulation
into modeling languages while capturing high level structure of the problem
so that the back-end solver can harness the structure for improved
performance.

Specifically, our framework allows shared constraints to be represented
without any replications and makes it easy to switch between different
solution types associated with them, for example variational
equilibrium~\cite[Definition 3.10]{facchinei10}.
We introduce a new concept, shared variables, and show their manifestations in
the literature.
Shared variables have potential for many different uses:
i) they can be used to reduce the density of the model;
ii) they can model some EPECs sharing the same variables and constraints to
represent equilibrium constraints;
iii) we can easily switch between price-taking and price-making agents in
economics models;
iv) they can be used to model shared objective functions.
The last case opens the door for our framework to be used to model the block
coordinate descent method, where agents now correspond to a block of
variables.
Finally, we define a new construct that allows QVI formulations to be
specified in an intuitive and natural way.
The new features have been implemented and are available within GAMS/EMP.
In this case, we use a problem reformulation solver JAMS, and choose
formulations if necessary in an option file {\tt jams.opt}.

The rest of the paper is organized as follows.
In Section~\ref{sec:preliminaries}, we define the equilibrium formulations our
framework allows.
We describe conditions under which the complementarity form is equivalent to
the given equilibrium problem and its variational form.
Section~\ref{sec:equilibrium} presents the underlying assumptions of the
existing framework and shows how we can model equilibrium problems satisfying
these assumptions.
In Sections~\ref{sec:shared-constr}-\ref{sec:impl}, we present sophisticated
structures that violate the assumptions and introduce our modifications to incorporate them into our framework.
Section~\ref{sec:shared-constr} describes shared constraints and
presents a new construct to define the type of solutions, either GNEP
equilibria or variational equilibria, associated with them.
In Section~\ref{sec:impl}, we introduce shared variables and various
complementarity formulations for them.
Section~\ref{sec:qvi} presents a new construct to specify QVIs and compares
two equivalent ways of specifying equilibrium problems in either GNEP or
QVI form.
At the end of each section of Sections~\ref{sec:equilibrium}-\ref{sec:qvi},
we provide examples from the literature that can be neatly formulated using
the feature of our framework.
Section~\ref{sec:conclusions} concludes the paper, pointing out some areas for
future extensions.

\section{Preliminaries}
\label{sec:preliminaries}

For given equilibrium problems or their variational forms, the default action
of our framework converts them into MCPs and computes a solution to those
complementarity problems.
In this section, we describe equivalences of the equilibrium problems with
quasi-variational inequalities, variational inequalities, and mixed
complementarity problems.

We first introduce QVIs, VIs, and MCPs in a finite-dimensional space.
For a given continuous function $F:\real^n \rightarrow \real^n$ and a
point-to-set mapping $K(x):\real^n \rightrightarrows \real^n$ where $K(x)$
is a closed convex set for each $x \in \real^n$, $x^* \in K(x^*)$ is a
solution to the $\QVI(K,F)$ if
\begin{equation}
  \langle F(x^*), x - x^* \rangle \ge 0, \quad \forall x \in K(x^*),
  \tag{QVI}
  \label{eq:qvi}
\end{equation}
where $\langle \cdot,\cdot \rangle$ is the Euclidean inner product.

If we restrict the point-to-set mapping $K(\cdot)$ to be a fixed closed
convex set $K \subset \real^n$, then $x^* \in K$ is a solution to the
$\VI(K,F)$ if
\begin{equation}
  \langle F(x^*), x - x^* \rangle \ge 0, \quad \forall x \in K.
  \tag{VI}
  \label{eq:vi}
\end{equation}

If we further specialize to the case where the feasible region is a box
$B=\{x \in \real^n \mid l_i \le x_i \le u_i, \text{ for }i=1,\dots,n\}$ with
$l_i \le u_i$ and $l_i \in \real \cup \{-\infty\}$ and
$u_i \in \real \cup \{\infty\}$,
the $\VI(B,F)$ is typically termed a mixed complementary problem.
In this case, $x^* \in B$ is a solution to the $\MCP(B,F)$ if one of
the following conditions holds for each $i=1,\dots,n$:
\begin{equation}
  \begin{aligned}
    x_i^* = l_i, && F_i(x^*) \ge 0,\\
    l_i \le x_i^* \le u_i, && F_i(x^*) = 0,\\
    x_i^* = u_i, && F_i(x^*) \le 0.
  \end{aligned}
  \tag{MCP}
  \label{eq:mcp}
\end{equation}
In shorthand notation, the above condition is written as $l \le x^* \le u
\, \perp \, F(x^*)$.
We sometimes put a bound constraint on a function explicitly when the
corresponding variable has only one-sided bound and use $\MCP(x,F)$ when the
feasible region of $x$ is clear from the context.

Throughout this paper, we assume by default that equilibrium problems are of
the form~\eqref{eq:mopec}, and there are $(N+1)$ number of agents where the
first $N$ agents are optimization agents, and the $(N+1)$th agent is an
equilibrium agent.
When there is no equilibrium agent, then the problem becomes a (generalized)
Nash equilibrium problem.
If there are no optimization agents but a single equilibrium agent, then the
problem is a variational inequality.
All results in this section hold in the case where either type of agent
is not present.

The results described below are simple extensions of the existing
results found in~\cite{harker91}.
We first show the equivalence between the equilibrium problems and their
associated \QVI{}s.

\begin{proposition}
  If $f_i(\cdot,\cdot)$ is continuously differentiable,
  $f_i(\cdot,x_{-i})$ is a convex function, and $K_i(x_{-i})$ is a
  closed convex set for each given $x_{-i}$, then $x^*$ is a solution to the
  equilibrium problem~\eqref{eq:mopec} if and only if it is a solution to
  the $\QVI(K,F)$ where
  \begin{equation*}
    \begin{aligned}
      K(x) &= \prod_{i=1}^{N+1} K_i(x_{-i}),\\
      F(x) &= (\nabla_{x_1}f_i(x_1,x_{-1})^\tr,
      \dots,\nabla_{x_N}f_N(x_N,x_{-N})^\tr,
      G(x_{N+1},x_{-(N+1)})^\tr)^\tr.
    \end{aligned}
  \end{equation*}
  \label{prop:equilprob-qvi}
\end{proposition}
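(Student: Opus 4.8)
The plan is to prove the equivalence agentwise, reducing the coupled equilibrium system to a product of first-order characterizations and then stitching these together into a single QVI. First I would treat each optimization agent $i \in \{1,\dots,N\}$ separately. For fixed $x_{-i}^*$, the problem $x_i^* \in \argmin_{x_i \in K_i(x_{-i}^*)} f_i(x_i,x_{-i}^*)$ is a convex program, since by hypothesis $f_i(\cdot,x_{-i}^*)$ is convex and $K_i(x_{-i}^*)$ is closed convex. The standard first-order optimality theory for convex optimization over a convex set then tells me that $x_i^*$ is a minimizer if and only if it solves the variational inequality $\langle \nabla_{x_i} f_i(x_i^*,x_{-i}^*), x_i - x_i^* \rangle \ge 0$ for all $x_i \in K_i(x_{-i}^*)$; this is exactly the $\VI(K_i(x_{-i}^*), \nabla_{x_i} f_i(\cdot,x_{-i}^*))$. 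The convexity and differentiability assumptions are precisely what make this equivalence an ``if and only if'' rather than merely a necessary condition.

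Next I would handle the equilibrium agent $(N+1)$, which is already posed as $x_{N+1}^* \in \SOL(K_{N+1}(x_{-(N+1)}^*), G(\cdot,x_{-(N+1)}^*))$, i.e. a parametrized VI requiring no transformation. (I am reading $F$ in the equilibrium-agent slot as this map $G$, consistent with the statement's $F$.) At this point I have rewritten every agent's condition as a parametrized VI over the corresponding factor set $K_i(x_{-i}^*)$.

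The crux is then the aggregation step: I would show that the collection of $N+1$ parametrized VIs holding simultaneously at the common point $x^*$ is equivalent to the single $\QVI(K,F)$ with $K(x)=\prod_{i=1}^{N+1} K_i(x_{-i})$ and $F$ the concatenation of the per-agent maps. The forward direction uses that a feasible test point of the product QVI can be chosen to vary only in the $i$-th block while agreeing with $x^*$ elsewhere, so the product inequality $\langle F(x^*), x-x^* \rangle \ge 0$ collapses to the $i$-th agent's VI inequality; ranging over $i$ recovers all agent conditions. For the reverse direction I would take an arbitrary $x \in K(x^*)$, which by definition of the product set means $x_i \in K_i(x_{-i}^*)$ for each $i$, and sum the $N+1$ individual VI inequalities: because the inner product over $\real^n$ decomposes as $\langle F(x^*),x-x^*\rangle = \sum_{i=1}^{N+1} \langle F_i(x^*), x_i - x_i^* \rangle$ with each summand nonnegative, the aggregate QVI inequality follows.

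I expect the main obstacle to be bookkeeping rather than conceptual depth: one must be careful that the feasibility condition $x^* \in K(x^*)$ of the QVI corresponds exactly to each agent being feasible in its own (parametrized) set, and that the separability of the test set $K(x^*)$ into independent blocks is what legitimizes both collapsing to a single block (forward) and summing (reverse). The only subtlety worth flagging is that this block-separation argument relies on the product structure of $K(\cdot)$, so I would state explicitly that each agent's feasible set depends only on the rival variables $x_{-i}$, which is exactly the hypothesis that makes $K(x)=\prod_i K_i(x_{-i})$ well defined; the convexity/differentiability hypotheses are consumed entirely in the per-agent optimization-to-VI step and play no further role in the aggregation.
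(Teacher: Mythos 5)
Your proposal is correct and follows essentially the same route as the paper: reduce each optimization agent to a parametrized VI via first-order necessary-and-sufficient conditions under the convexity and differentiability hypotheses, then use the product structure of $K(\cdot)$ to pass between the per-agent VIs and the aggregate QVI (summing block inequalities in one direction, using test points that vary in a single block in the other). The paper's proof merely compresses the aggregation step you spell out, so the only difference is your more explicit bookkeeping of feasibility and block decomposition.
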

\begin{proof}
  $(\Rightarrow)$ Let $x^*$ be a solution to~\eqref{eq:mopec}.
  For optimization agents, the first-order optimality conditions are necessary
  and sufficient by the given assumption.
  Therefore we have
  \begin{equation*}
    \langle \nabla_{x_i}f_i(x_i^*,x_{-i}^*), x_i - x_i^* \rangle \ge 0,
    \quad \forall x_i \in K_i(x_{-i}^*), \text{ for }i=1,\dots,N.
  \end{equation*}
  Also we have
  \begin{equation*}
    \langle G(x_{N+1}^*,x_{-(N+1)}^*), x_{N+1} - x_{N+1}^* \rangle \ge
    0,
    \quad \forall x_{N+1} \in K_{N+1}(x_{-(N+1)}^*).
  \end{equation*}
  The result follows.

  $(\Leftarrow)$
  Let $x^*$ be a solution to the $\QVI(K,F)$.
  The result immediately follows from the fact that $K(x)$ is a product space
  of $K_i(x_{-i})$'s for $i=1,\dots,N+1$.\qed
\end{proof}

If each agent $i$ has knowledge of a closed convex set $X$ and
use this to define its feasible region $K_i(x_{-i})$ using a shared
constraint $K_i(x_{-i}) := \{x_i \in \real^{n_i} \mid (x_i,x_{-i}) \in X\}$,
then the $\QVI(K,F)$ can be solved using a simpler $\VI(X,F)$.
\begin{proposition}
  Suppose that $K_i(x_{-i})=\{x_i \in \real^{n_i} \mid (x_i,x_{-i}) \in X\}$
  for $i=1,\dots,N+1$ with $X$ being a closed convex set.
  If $x^*$ is a solution to the $\VI(X,F)$ with $F$ defined in
  Proposition~\ref{prop:equilprob-qvi}, then it is a solution to the
  $\QVI(K,F)$, thus it is a solution to~\eqref{eq:mopec} with the
  same assumptions on $f_i(\cdot)$ given in
  Proposition~\ref{prop:equilprob-qvi}.
  The converse may not hold.
  \label{prop:equilprob-vi}
\end{proposition}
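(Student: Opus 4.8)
The plan is to exploit the product structure of $K(x^*)$ together with the special form $K_i(x_{-i})=\{x_i \in \real^{n_i} \mid (x_i,x_{-i})\in X\}$ in order to reduce the \QVI{} inequality to a sum of block-wise inequalities, each of which is a direct consequence of the \VI{} inequality. First I would verify feasibility for the \QVI{}: since $x^*$ solves the $\VI(X,F)$ we have $x^* \in X$, which by definition of the shared constraint means $x_i^* \in K_i(x_{-i}^*)$ for every $i$, hence $x^* \in K(x^*)=\prod_{i=1}^{N+1}K_i(x_{-i}^*)$.

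The heart of the argument is the variational inequality itself. The tempting move --- taking an arbitrary $y \in K(x^*)$ and plugging it directly into the \VI{} inequality --- fails, because $y \in K(x^*)$ only guarantees $(y_i,x_{-i}^*)\in X$ for each block $i$ \emph{separately} and not $y \in X$. This is the main obstacle, and the resolution, which I expect to be the key step, is to perturb $x^*$ one block at a time. For a fixed $y \in K(x^*)$ and each $i$, let $z^{(i)}$ be $x^*$ with its $i$-th block replaced by $y_i$, so that in block notation $z^{(i)}=(y_i,x_{-i}^*)$. Because $y_i \in K_i(x_{-i}^*)$ is precisely the statement $(y_i,x_{-i}^*)\in X$, each $z^{(i)}$ lies in $X$ and is therefore an admissible test point in the $\VI(X,F)$ condition. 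Since $z^{(i)}-x^*$ is supported only on block $i$, the inner product $\langle F(x^*), z^{(i)}-x^*\rangle$ collapses to that block, yielding $\langle F_i(x^*), y_i-x_i^*\rangle \ge 0$ for each $i=1,\dots,N+1$. Summing these inequalities over $i$ reconstitutes $\langle F(x^*), y-x^*\rangle \ge 0$, and as $y \in K(x^*)$ was arbitrary, $x^*$ solves the $\QVI(K,F)$. The conclusion that $x^*$ then solves~\eqref{eq:mopec} is immediate from Proposition~\ref{prop:equilprob-qvi}.

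For the final assertion that the converse may fail, I would supply a counterexample rather than a proof. Solutions of the $\QVI(K,F)$ are the generalized Nash equilibria induced by the shared constraint, whereas solutions of the $\VI(X,F)$ are exactly the variational equilibria associated with $X$, and it is well known that the former set is in general strictly larger. Concretely, a minimal two-agent game sharing a single coupling constraint, in which the multipliers attached to the shared constraint need not coincide across agents at a GNEP equilibrium, already produces a \QVI{} solution that fails the common-multiplier requirement of the $\VI(X,F)$; exhibiting one such point suffices to show that the reverse inclusion can be strict.
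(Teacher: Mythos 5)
Your proof is correct and rests on exactly the same key idea as the paper's: the unilateral perturbation points $(y_i,x_{-i}^*)$ lie in $X$ by the shared-constraint structure and reduce the \QVI{} inequality to block-wise \VI{} inequalities --- the paper merely phrases this as a contradiction (if the \QVI{} inequality failed at some $x \in K(x^*)$, some block inner product would be negative, and the single perturbation $(x_i,x_{-i}^*) \in X$ would violate the \VI{}), whereas you argue directly and sum over all blocks, a purely cosmetic difference. For the non-converse, the paper just cites the example in Section 3 of~\cite{harker91}; your sketch of a two-agent game with a shared coupling constraint and mismatched multipliers is the right kind of witness, though a fully self-contained treatment would exhibit such a point concretely.
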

\begin{proof}
  $(\Rightarrow)$
  Let $x^*$ be a solution to the $\VI(X,F)$.
  Clearly, $x^* \in K(x^*)$.
  We prove by contradiction.
  Suppose there exists $x \in K(x^*)$ such that $\langle F(x^*), x - x^*
  \rangle < 0$.
  There must exist $i \in \{1,\dots,N+1\}$ satisfying
  $\langle F_i(x^*), x_i - x_i^* \rangle < 0$.
  Set $\tilde{x} = (x_i,x_{-i}^*)$.
  As $x_i \in K_i(x_{-i}^*)$, $\tilde{x} \in X$.
  Then, $\langle F(x^*), \tilde{x} - x^* \rangle < 0$, which is a
  contradiction.

  $(\nLeftarrow)$ See the example in Section 3 of~\cite{harker91}.\qed
\end{proof}

When the constraints are explicitly given as equalities and inequalities with
a suitable constraint qualification holding, we can compute a solution to
the equilibrium problems using their associated \MCP{} and vice versa.
Throughout this section, by a suitable constraint qualification we mean a
constraint qualification implying the KKT conditions hold at a local
optimal solution, for example the Mangasarian-Fromovitz or the
Slater constraint qualification.
Also when we say a constraint qualification holds at $x$, we imply that it
holds at $x_i \in K_i(x_{-i})$ for each agent $i$.

\begin{proposition}
  Suppose that $K_i(x_{-i})=\{x_i \in [l_i,u_i] \mid
  h_i(x_i,x_{-i})=0, g_i(x_i,x_{-i}) \le 0\}$ where
  $h_i(\cdot):\real^n \rightarrow \real^{v_i}$ is an affine function,
  each $g_i(\cdot):\real^n \rightarrow \real^{m_i}$
  is continuously differentiable and a convex function of $x_i$ and
  $l_i \le u_i, l_i \in \real^{n_i} \cup \{-\infty\}^{n_i}$, and
  $u_i \in \real^{n_i} \cup \{\infty\}^{n_i}$.
  With the same assumptions on $f_i$ given in
  Proposition~\ref{prop:equilprob-qvi}, $x^*$ is a solution
  to~\eqref{eq:mopec} if and only if $(x^*,\lambda^*,\mu^*)$ is a solution
  to the $\MCP(B,F)$, assuming that a suitable constraint qualification holds
  at $x^*$ with
  \begin{equation*}
    \begin{aligned}
      B &= &&\prod_{i=1}^{N+1} [l_i,u_i] \times \real^v \times \real^m_-,
      \quad v = \sum_{i=1}^{N+1}v_i, \quad m = \sum_{i=1}^{N+1} m_i,\\
      F(x,\lambda,\mu) &= &&(
      (\nabla_{x_1}f_1(x) - \nabla_{x_1}h_1(x)\lambda_1 -
      \nabla_{x_1}g_1(x)\mu_1)^\tr,\dots,\\
      &&&\,\,(\nabla_{x_N}f_N(x) - \nabla_{x_N}h_N(x)\lambda_N -
      \nabla_{x_N}g_N(x)\mu_N)^\tr,\\
      &&&\,\,(G(x) - \nabla_{x_{N+1}}h_{N+1}(x)\lambda_{N+1} -
      \nabla_{x_{N+1}}g_{N+1}(x)\mu_{N+1})^\tr,\\
      &&&h_1(x)^\tr,\dots,h_{N+1}(x)^\tr,\\
      &&&g_1(x)^\tr,\dots,g_{N+1}(x)^\tr)^\tr.
    \end{aligned}
  \end{equation*}
  \label{prop:equilprob-mcp}
\end{proposition}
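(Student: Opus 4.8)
The plan is to exploit the separable (product) structure already recorded in Proposition~\ref{prop:equilprob-qvi}: the stationarity block of $F$ for agent $i$ involves only that agent's multipliers $(\lambda_i,\mu_i)$, so the complementarity system $\MCP(B,F)$ decouples into $N+1$ independent blocks. Each block is exactly the first-order (KKT) system of the corresponding agent, with the box $[l_i,u_i]$ retained \emph{directly} as bounds on $x_i$ rather than dualized, and with $h_i=0$, $g_i\le0$ dualized through $\lambda_i\in\real^{v_i}$ and $\mu_i\in\real^{m_i}_-$. It therefore suffices to prove, for a fixed value $x_{-i}^*$ of the rival variables, that agent $i$'s subproblem is solved at $x_i^*$ if and only if its block of $\MCP(B,F)$ admits multipliers $(\lambda_i^*,\mu_i^*)$; the two directions of the proposition then follow by assembling these blockwise equivalences, using Proposition~\ref{prop:equilprob-qvi} to pass between \eqref{eq:mopec} and the agentwise variational conditions.

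For the forward direction I would argue necessity. Fix an optimization agent $i\in\{1,\dots,N\}$. Since $f_i(\cdot,x_{-i}^*)$ is convex, $h_i(\cdot,x_{-i}^*)$ affine and $g_i(\cdot,x_{-i}^*)$ convex, the set $K_i(x_{-i}^*)$ is closed and convex and $x_i^*$ is a global minimizer; under the assumed constraint qualification the KKT conditions hold, producing $\lambda_i^*\in\real^{v_i}$ and $\mu_i^*\le0$ so that $\nabla_{x_i}f_i-\nabla_{x_i}h_i\lambda_i^*-\nabla_{x_i}g_i\mu_i^*$ is complementary to $l_i\le x_i^*\le u_i$, together with $h_i(x^*)=0$, $g_i(x^*)\le0$ and $\mu_i^{*\tr}g_i(x^*)=0$; these are precisely agent $i$'s block of $\MCP(B,F)$. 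For the equilibrium agent $i=N+1$, $x_{N+1}^*$ solves $\VI(K_{N+1}(x_{-(N+1)}^*),G(\cdot,x_{-(N+1)}^*))$, i.e. $-G(x^*)$ lies in the normal cone of $K_{N+1}(x_{-(N+1)}^*)$ at $x_{N+1}^*$; the constraint qualification lets me represent that normal cone through the active constraint gradients and the box, again yielding $\lambda_{N+1}^*,\mu_{N+1}^*$ that satisfy the corresponding block. Concatenating all blocks produces a solution $(x^*,\lambda^*,\mu^*)$ of $\MCP(B,F)$.

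For the reverse direction I would argue sufficiency, which needs no constraint qualification and rests on convexity instead. Suppose $(x^*,\lambda^*,\mu^*)$ solves $\MCP(B,F)$. For an optimization agent the block is the KKT system of a convex program, so $x_i^*$ globally minimizes $f_i(\cdot,x_{-i}^*)$ over $K_i(x_{-i}^*)$. For the equilibrium agent I would verify the variational inequality directly: writing $r=G(x^*)-\nabla_{x_{N+1}}h_{N+1}\lambda_{N+1}^*-\nabla_{x_{N+1}}g_{N+1}\mu_{N+1}^*$, the box complementarity gives $\langle r,x_{N+1}-x_{N+1}^*\rangle\ge0$ for all $x_{N+1}\in[l_{N+1},u_{N+1}]$; affineness of $h_{N+1}$ makes the $\lambda_{N+1}^*$ term vanish on the feasible set (where $h_{N+1}=0$), and convexity of $g_{N+1}$ together with $\mu_{N+1}^*\le0$, $g_{N+1}(x)\le0$ and $\mu_{N+1}^{*\tr}g_{N+1}(x^*)=0$ makes the $\mu_{N+1}^*$ term nonnegative, so $\langle G(x^*),x_{N+1}-x_{N+1}^*\rangle\ge0$ for every $x_{N+1}\in K_{N+1}(x_{-(N+1)}^*)$. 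Thus all agentwise variational conditions hold, and Proposition~\ref{prop:equilprob-qvi} returns a solution of \eqref{eq:mopec}.

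I expect the main obstacle to be the necessity step for the equilibrium agent: converting the geometric statement $-G(x^*)\in N_{K_{N+1}(x_{-(N+1)}^*)}(x_{N+1}^*)$ into explicit multipliers requires the constraint qualification to describe that normal cone by the gradients of the active components of $g_{N+1}$, the Jacobian of $h_{N+1}$, and the normal cone of the box, and then to match signs so that $\mu_{N+1}^*\le0$. Keeping the box constraints undualized throughout, so that they contribute a complementarity term rather than extra multipliers, and reconciling the sign convention $\mu\in\real^{m}_-$ with the usual nonnegative-multiplier KKT statement, are the bookkeeping points requiring the most care; the convexity-based sufficiency computation is otherwise routine.
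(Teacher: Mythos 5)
Your proposal is correct and follows essentially the same route as the paper's proof: both decompose $\MCP(B,F)$ into agentwise KKT blocks, use the constraint qualification to produce multipliers in the necessity direction (for the optimization agents and, via the normal cone of $K_{N+1}$, for the \VI{} agent), and use convexity of $f_i,g_i$ and affineness of $h_i$ for sufficiency. The only refinement is that you establish sufficiency by a direct inner-product verification and correctly note that it needs no constraint qualification, whereas the paper reaches the same conclusion by citing the CQ-based normal-cone representation $N_{K_i(x_{-i}^*)}(x_i^*)=\{-\nabla_{x_i}h_i(x^*)\lambda_i-\nabla_{x_i}g_i(x^*)\mu_i \mid 0=h_i(x^*)\perp\lambda_i,\ 0\ge g_i(x^*)\perp\mu_i\le 0\}+N_{[l_i,u_i]}(x_i^*)$ together with convexity, even though only the elementary (CQ-free) inclusion of that representation is actually needed there.
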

\begin{proof}
  $(\Rightarrow)$
  Let $x^*$ be a solution to~\eqref{eq:mopec}.
  Using the KKT conditions of each optimization agent and the \VI{}, and
  constraint qualification at $x^*$, there exist $(\lambda^*,\mu^*)$ such that
  \begin{equation}
    \begin{aligned}
      \nabla_{x_i}f_i(x^*) - \nabla_{x_i}h_i(x^*)\lambda_i^* -
      \nabla_{x_i}g_i(x^*)\mu_i^* && \perp &&&
      l_i \le x_i^* \le u_i, && \text{for} \quad i=1,\dots,N,\\
      G(x^*) - \nabla_{x_i}h_i(x^*)\lambda_i^* -
      \nabla_{x_i}g_i(x^*)\mu_i^* && \perp &&&
      l_i \le x_i^* \le u_i, && \text{for} \quad i=N+1,\\
      0 = h_i(x^*) && \perp &&& \lambda_i^* \text{ free},
      && \text{for} \quad i=1,\dots,N+1,\\
      0 \ge g_i(x^*) && \perp &&&
      \mu_i^* \le 0, && \text{for} \quad i=1,\dots,N+1.\\
    \end{aligned}
    \label{eq:equilprob-mcp}
  \end{equation}
  Thus $(x^*,\lambda^*,\mu^*)$ is a solution to the $\MCP(B,F)$.

  $(\Leftarrow)$
  Let $(x^*,\lambda^*,\mu^*)$ be a solution to the $\MCP(B,F)$.
  Then $(x^*,\lambda^*,\mu^*)$ satisfies~\eqref{eq:equilprob-mcp}.
  Since the constraint qualification holds at $x^*$, we have
  $N_{K_i(x_{-i}^*)} =
  \{-\nabla_{x_i}h_i(x^*)\lambda_i-\nabla_{x_i}g_i(x^*)\mu_i \mid
  0 = h_i(x^*) \perp \lambda_i,
  0 \ge g_i(x^*) \perp \mu_i \le 0\} + N_{[l_i,u_i]}(x_i^*)$
  for $i=1,\dots,N+1$.
  The result follows from convexity.\qed
\end{proof}

If the convexity assumptions on the objective functions and the constraints of
optimization agents' problems do not hold, then one can easily check that a
stationary point to~\eqref{eq:mopec} is a solution to the \MCP{} model defined
in Proposition~\ref{prop:equilprob-mcp} and vice versa.
By a stationary point, we mean that $x_i^*$ satisfies the first-order
optimality conditions of each optimization agent $i$'s problem, and
$x_{N+1}^*$ is a solution to the equilibrium agent's problem.

Finally, we present the equivalence between QVIs and MCPs.
\begin{proposition}
  For a given $\QVI(K,F)$, suppose that $K(x)=\{l \le y \le u \mid
  h(y,x)=0,g(y,x) \le 0\}$ where $h:\mathbb{R}^{n \times n} \rightarrow
  \mathbb{R}^v$ and $g:\mathbb{R}^{n \times n} \rightarrow \mathbb{R}^m$.
  Assuming that a suitable constraint qualification holds, $x^*$ is a
  solution to the $\QVI(K,F)$ if and only if $(x^*,\lambda^*,\mu^*)$ is a
  solution to the $\MCP(B,\tilde{F})$ where
  \begin{equation*}
    \begin{aligned}
      B &= && [l,u] \times \real^v \times \real^m_-,\\
      \tilde{F}(x,\lambda,\mu) &= &&
      \begin{bmatrix}
        F(x) - \nabla_{y}h(x,x)\lambda - \nabla_{y}g(x,x)\mu\\
        h(x,x)\\
        g(x,x)
      \end{bmatrix}
    \end{aligned}
  \end{equation*}
  \label{prop:qvi-mcp}
\end{proposition}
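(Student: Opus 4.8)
The plan is to reduce the self-referential $\QVI(K,F)$, at its solution, to an ordinary variational inequality over the \emph{frozen} convex set $K(x^*)$, so that the KKT/normal-cone argument of Proposition~\ref{prop:equilprob-mcp} applies almost verbatim. The only genuinely new bookkeeping is the parametric dependence: in forming the optimality conditions one differentiates $h$ and $g$ only with respect to their first (decision) argument, holding the second (parametric) argument fixed at $x^*$, which is precisely what produces the partial gradients $\nabla_y h(x^*,x^*)$ and $\nabla_y g(x^*,x^*)$ appearing in $\tilde F$.

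First I would rewrite the defining inequality $\langle F(x^*), y - x^*\rangle \ge 0$ for all $y \in K(x^*)$ in the equivalent inclusion form $-F(x^*) \in N_{K(x^*)}(x^*)$, where $N_{K(x^*)}(x^*)$ is the normal cone to the fixed closed convex set $K(x^*)$ at $x^*$. Since $K(x^*) = \{l \le y \le u \mid h(y,x^*)=0,\, g(y,x^*)\le 0\}$ and a suitable constraint qualification holds, the normal cone admits the decomposition
\begin{equation*}
  N_{K(x^*)}(x^*) = \{-\nabla_y h(x^*,x^*)\lambda - \nabla_y g(x^*,x^*)\mu \mid 0=h(x^*,x^*) \perp \lambda,\ 0 \ge g(x^*,x^*) \perp \mu \le 0\} + N_{[l,u]}(x^*),
\end{equation*}
with all gradients taken only in the first argument. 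This is the exact analogue of the normal-cone identity used in Proposition~\ref{prop:equilprob-mcp}, now with the single frozen set $K(x^*)$ playing the role of the individual agents' feasible regions.

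For the forward direction I would take a solution $x^*$ of the $\QVI(K,F)$, invoke the inclusion above to extract multipliers $(\lambda^*,\mu^*)$, and read off the three blocks of the $\MCP(B,\tilde F)$ conditions: the box complementarity $l \le x^* \le u \perp F(x^*) - \nabla_y h(x^*,x^*)\lambda^* - \nabla_y g(x^*,x^*)\mu^*$ from the $N_{[l,u]}(x^*)$ term, together with $0 = h(x^*,x^*) \perp \lambda^*$ free and $0 \ge g(x^*,x^*) \perp \mu^* \le 0$. For the converse I would run the argument backwards: a solution $(x^*,\lambda^*,\mu^*)$ of the $\MCP(B,\tilde F)$ gives $x^* \in K(x^*)$ from the $h$, $g$, and box feasibility, and reassembling the complementarity conditions places $-F(x^*)$ back into $N_{K(x^*)}(x^*)$, which by convexity of $K(x^*)$ is exactly the $\QVI(K,F)$ condition.

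The main obstacle is conceptual rather than computational: one must justify that the normal-cone representation of $K(x^*)$ involves only the partial gradients $\nabla_y h(x^*,x^*)$ and $\nabla_y g(x^*,x^*)$, because $K(x^*)$ is the set obtained by holding the parameter fixed at $x^*$ and the variational inequality is tested only against perturbations of the decision variable. Once this parametric freezing is made precise---so that the quasi-variational inequality is treated at its fixed point as a variational inequality over a genuinely fixed convex set---the remaining complementarity derivation is identical to that of Proposition~\ref{prop:equilprob-mcp}.
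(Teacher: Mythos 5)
Your proposal is correct and is essentially the paper's own argument: the paper's proof consists of the single remark that the result follows ``by applying similar techniques used in the proof of Proposition~\ref{prop:equilprob-mcp},'' and your freezing of the parameter at $x^*$, normal-cone decomposition of $K(x^*)$ under the constraint qualification, and convexity-based converse are exactly those techniques, spelled out. In effect you have supplied the details that the paper leaves implicit, including the one genuinely QVI-specific point---that only partial gradients in the decision argument appear---which the paper's statement of $\tilde F$ takes for granted.
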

\begin{proof}
  By applying similar techniques used in the proof of
  Proposition~\ref{prop:equilprob-mcp}, we get the desired result.\qed
\end{proof}

For a given equilibrium problem or quasi-variational inequality our
framework generates the \MCP{} model defined in
Propositions~\ref{prop:equilprob-mcp}-\ref{prop:qvi-mcp}, respectively, and
solves it using \PATH{}.
If the feasible region is defined by a shared constraint, users can choose
between the \VI{} defined in Proposition~\ref{prop:equilprob-vi} and the
\MCP{} by specifying the solution type.
This will be discussed in Section~\ref{sec:shared-constr}.
Other extensions are found in Sections~\ref{sec:impl}-\ref{sec:qvi}.
While this constitutes one method of solution, the ability to define the
structured equilibria explicitly opens the door for new solution
methods~\cite{youngdae17}.

\section{Modeling equilibrium problems using the existing EMP framework}
\label{sec:equilibrium}

We now describe how to specify equilibrium problems in modeling languages
using the EMP framework.
While this is implemented in GAMS syntax, the extension to other modeling
systems is straightforward.
We first present the underlying assumptions on the specification and discuss
their limitations in Section~\ref{subsec:underlying-assumptions}.
Examples from the literature are given in Section~\ref{subsec:examples-existing}.
In Sections~\ref{sec:shared-constr}-\ref{sec:impl}, we relax these
assumptions to take more sophisticated structures into account.

\subsection{Specifying equilibrium problems and underlying assumptions}
\label{subsec:underlying-assumptions}

Standard equilibrium problems can be specified in modeling languages using
our framework.
Suppose that we are given the following NEP:
\begin{equation}
  \begin{aligned}
    &\text{find} && (x_1^*,\dots,x_N^*) && \text{satisfying},\\
    &x_i^* \in &&\argmin_{x_i} && f_i(x_i,x_{-i}^*),\\
    &&& \text{subject to} && h_i(x_i) = 0,\\
    &&& && g_i(x_i) \le 0, \text{ for }i=1,\dots,N.
  \end{aligned}
  \label{eq:nep}
\end{equation}

We need to specify each agent's variables, its objective function, and constraints.
Functions and constraints are given as a closed-form in modeling languages:
they are explicitly written using combinations of mathematical operators such
as summation, multiplication, square root, log, and so on.
The EMP partitions the variables, functions, and constraints among the
agents using annotations given in an {\tt empinfo} file.
For example, we may formulate and solve~\eqref{eq:nep} within GAMS/EMP as
follows:
\begin{lstlisting}[caption={Modeling the
NEP},label={lst:nep-emp},language=GAMS]
variables obj(i), x(i);
equations deff(i), defh(i), defg(i);

* Definitions of deff(i), defh(i), and defg(i) are omitted for expository purposes.

model nep / deff, defh, defg /;

file empinfo / '%emp.info%' /;
put empinfo 'equilibrium' /;
loop(i,
  put 'min', obj(i), x(i), deff(i), defh(i), defg(i) /;
);
putclose;

solve nep using emp;
\end{lstlisting}

Let us explain Listing~\ref{lst:nep-emp}. Variable \texttt{obj(i)} holds the
value of $f_i(x)$, \texttt{x(i)} represents variable $x_i$, and
\texttt{deff(i)}, \texttt{defh(i)}, and \texttt{defg(i)} are the closed-form
definitions of the objective function $f_i(x)$ and the constraints $h_i(x_i)$
and $g_i(x_i)$, respectively, for $i=1,\dots,N$.
Equations listed in the model statement and variables in these equations
constitute the model \texttt{nep}.

Once the model is defined, a separate \texttt{empinfo} file is created to
specify the equilibrium problem.
In the above case, the \texttt{empinfo} file has the following contents:

\begin{center}
\begin{BVerbatim}
equilibrium
min obj('1') x('1') deff('1') defh('1') defg('1')
...
min obj('N') x('N') deff('N') defh('1') defg('N')
\end{BVerbatim}
\end{center}

The \texttt{equilibrium} keyword informs EMP that the annotations are for an
equilibrium problem.
A list of agents' problem definitions separated by either a \texttt{min} or
\texttt{max} keyword for each optimization agent follows.
For each \texttt{min} or \texttt{max} keyword, the objective variable to
optimize and a list of agent's decision variables are given.
After these variables, a list of equations that define the agent's objective
function and constraints follows.
We say that variables and equations listed are owned by the agent.
Note that variables other than \texttt{x('1')} that appear in
\texttt{deff('1')}, \texttt{defh('1')}, or \texttt{defg('1')} are treated as
parameters to the first agent's problem; that is how we define $x_{-i}$.
The way each agent's problem is specified closely resembles its algebraic
formulation~\eqref{eq:nep}, and our framework reconstructs each agent's
problem by reading the {\tt empinfo} file.

The framework does not require any special keyword to distinguish between a
NEP and a GNEP.
If the function $h_i$ or $g_i$ is defined using other agents' decisions, that
is, $h_i(x_i,x_{-i}) = 0$ or $g_i(x_i,x_{-i}) \le 0$, the equilibrium model
written in Listing~\ref{lst:nep-emp} becomes a GNEP.
The distinction between the NEP and the GNEP depends only on how the
constraints are defined.

Note that in the {\tt empinfo} file above, each variable and equation
is owned exclusively by a single agent.
There is no unassigned variable or equation.
In the standard framework, neither multiple ownership nor missing ownership
are allowed; otherwise an error is generated.
Formally, the standard framework assumes the following:

\begin{assumption}
  A model of an equilibrium problem described by equations and variables is
  assumed to have the following properties in the empinfo file:
  \begin{itemize}
  \item Each equation of the model is owned by a single agent.
  \item Each variable of the model is owned by a single agent.
  \end{itemize}
  \label{asm:original-emp}
\end{assumption}

An implication of Assumption~\ref{asm:original-emp} is that the current
framework does not allow \textit{shared objective functions}, \textit{shared
  constraints}, and \textit{shared variables}.
Sections~\ref{sec:shared-constr}-\ref{sec:impl} give examples of problems
that violate Assumption~\ref{asm:original-emp} and provide techniques to
overcome or relax the requirements.

The MOPEC model can be defined in a very similar way. Suppose that we are
given the following MOPEC:
\begin{equation}
  \begin{aligned}
    &\text{find} && (x_1^*,\dots,x_N^*,p^*) \quad \text{satisfying},\\
    &x_i^* \in &&\argmin_{x_i} \quad f_i(x_i,x_{-i}^*),\\
    &&& \text{subject to} \quad h_i(x_i,x_{-i}^*) = 0,\\
    &&& \quad\quad\quad\quad\,\,\,\,\,\,\,
    g_i(x_i,x_{-i}^*) \le 0, \quad\text{for}\quad i=1,\dots,N,\\
    &p^* \in &&\SOL(K(x^*), V(p,x^*)),\\
    &&& \text{where} \quad K(x^*) := \{ p \mid w(p,x^*) \le 0\}.
  \end{aligned}
  \label{eq:gams-mopec}
\end{equation}
Assuming that $p \in \mathbb{R}^r$, we can then
formulate~\eqref{eq:gams-mopec} within GAMS/EMP in the following way:
\begin{lstlisting}[caption={Modeling the MOPEC},label={lst:mopec-emp},language=GAMS]
variables obj(i), x(i), p(j);
equations deff(i), defh(i), defg(i), defV(j), defw;

model mopec / deff, defh, defg, defV, defw /;

file empinfo / '%emp.info%' /;
put empinfo 'equilibrium' /;
loop(i,
  put 'min', obj(i), x(i), deff(i), defh(i), defg(i) /;
);
put 'vi defV p defw' /;
putclose empinfo;
\end{lstlisting}

In addition to optimization agents, we now have an equilibrium agent defined
with the \texttt{'vi'} keyword in Listing~\ref{lst:mopec-emp}.
The \texttt{'vi'} keyword is followed by variables, function-variable pairs,
and constraints.
Functions paired with variables constitute a \VI{} function, and the order of
functions and variables appeared in the pair is used to determine which
variable is assigned to which function when we compute the inner product in
the~\eqref{eq:vi} definition.
In this case, we say that each \VI{} function is matched with each variable
having the same order in the pair, i.e., \texttt{defV(j)} is matched with
\texttt{p(j)} for each $j=1,\dots,r$.
After all matching information is described, constraints follow.
Hence, the \VI{} function is \texttt{defV}, its variable is \texttt{p}, and
\texttt{defw} is a constraint.
The functions $f_i$, $h_i$, and $g_i$, defined in \texttt{deff(i)},
\texttt{defh(i)}, and \texttt{defg(i)} equations, respectively, may now
include the variable $p$.
One can easily verify that the specification in the \texttt{empinfo} file
satisfies Assumption~\ref{asm:original-emp}.

Variables, that are used only to define the constraint set and are owned by
the \VI{} agent, must be specified before any explicit function-variable
pairs.
In this case, we call those variables {\it preceding variables}.
The interface automatically assigns them to a zero function, that is, a
constant function having zero value.
For example, if we specify \texttt{`vi z Fy y'} where \texttt{z} and
\texttt{y} are variables and \texttt{Fy} is a \VI{} function matched with
\texttt{y}, then \texttt{z} is a preceding variable.
In this case, our interface automatically creates an artificial function
\texttt{Fz} defined by $F(z) \equiv 0$ and matches it with variable
\texttt{z}.

\subsection{Examples}
\label{subsec:examples-existing}

Examples of NEP, GNEP, and MOPEC taken from the literature are formulated in
the following sections using the EMP framework.

\subsubsection{NEP}
\label{subsubsec:exp-nash}

We consider the following oligopolistic market equilibrium problem~\cite{harker84,murphy82}:
\begin{equation}
  \begin{aligned}
    &\text{find} && (q_1^*,\dots,q_5^*) && \text{satisfying},\\
    &q_i^* \in && \argmax_{q_i \ge 0} && q_ip\left(\sum_{j=1,j\neq
        i}^5 q^*_j+q_i\right)  - f_i(q_i),\\
    &&&\text{where} && p(Q) := 5000^{1/1.1}(Q)^{-1/1.1},\\
    &&&             && f_i(q_i) := c_iq_i +
    \frac{\beta_i}{\beta_i+1}K_i^{-1/\beta_i}q_i^{(\beta_i+1)/\beta_i},\\
    &&&&& (c_i,K_i,\beta_i) \text{ is problem data},
    && \text{for} \quad i=1,\dots,5.\\
  \end{aligned}
  \label{eq:exp-nash}
\end{equation}

There are five firms, and each firm provides a homogeneous product with amount
$q_i$ to the market while trying to maximize its profit in a noncooperative
way.
The function $p(\cdot)$ is the inverse demand function, and its
value is determined by the sum of the products provided by all the firms.
The function $f_i(\cdot)$ is the total cost of firm $i$.
The problem~\eqref{eq:exp-nash} is a NEP.

Listing~\ref{lst:exp-nash} shows an implementation of~\eqref{eq:exp-nash}
within GAMS/EMP.
As we see, the \texttt{empinfo} file is a natural translation of the algebraic
form of~\eqref{eq:exp-nash}.
Using the same starting value as in~\cite{harker84,murphy82}, our GAMS/EMP
implementation computed a solution $q^*=(36.933,41.818,43.707,42.659,39.179)^\tr$ that
is consistent with the one reported in those papers.

\begin{lstlisting}[caption={Implementation of the NEP~\eqref{eq:exp-nash}
within GAMS/EMP},label={lst:exp-nash},language=GAMS]
sets i agents / 1*5 /;
alias(i,j);

parameters c(i)    / 1  10, 2   8, 3   6, 4   4, 5   2 /,
           K(i)    / 1   5, 2   5, 3   5, 4   5, 5   5 /,
           beta(i) / 1 1.2, 2 1.1, 3 1.0, 4 0.9, 5 0.8 /;

variables obj(i);
positive variables q(i);

equations objdef(i);

objdef(i)..
    obj(i) =e= q(i)*5000**(1.0/1.1)*sum(j, q(j))**(-1.0/1.1) - (c(i)*q(i) + beta(i)/(beta(i)+1)*K(i)**(-1/beta(i))*q(i)**((beta(i)+1)/beta(i)));

model nep / objdef /;

file empinfo / '%emp.info%' /;
put empinfo 'equilibrium' /;
loop(i,
    put 'max', obj(i), q(i), objdef(i) /;
);
putclose empinfo;

q.l(i) = 10;
solve nep using emp;
\end{lstlisting}

\subsubsection{GNEP}
\label{subsubsec:exp-gnep}

We use the following GNEP example derived from the QVI example
of~\cite[page 14]{outrata95}:
\begin{equation}
  \begin{aligned}
    &\text{find} && (x_1^*,x_2^*) && \text{satisfying},\\
    &x_1^* \in && \argmin_{0 \le x_1 \le 11} && x_1^2 + \frac{8}{3}x_1x^*_2 -
    \frac{100}{3}x_1,\\
    &&&\text{subject to} && x_1 + x^*_2 \le 15,\\
    &x_2^* \in && \argmin_{0 \le x_2 \le 11} && x_2^2 + \frac{5}{4}x^*_1x_2 -
    22.5x_2,\\
    &&&\text{subject to} && x^*_1 + x_2 \le 20.
  \end{aligned}
  \label{eq:exp-gnep}
\end{equation}

In~\eqref{eq:exp-gnep}, each agent solves a strongly convex optimization
problem.
Not only the objective functions but also the feasible region of each agent is
affected by other agent's decision.
Hence it is a GNEP.
Listing~\ref{lst:exp-gnep} shows an implementation of~\eqref{eq:exp-gnep}
within GAMS/EMP.
Our model has computed a solution
$(x_1^*,x_2^*)=(10,5)$ that is consistent with the one reported in~\cite{outrata95}.
In Section~\ref{subsec:qvi-example}, we show that~\eqref{eq:exp-gnep} can be
equivalently formulated as a QVI using our extension to the EMP framework.

\begin{lstlisting}[caption={Implementation of the GNEP~\eqref{eq:exp-gnep}
within GAMS/EMP},label={lst:exp-gnep},language=GAMS]
set i / 1*2 /;
alias(i,j);

variable obj(i);
positive variable x(i);

equation defobj(i), cons(i);

defobj(i)..
    obj(i) =E=
    (sqr(x(i)) + 8/3*x(i)*x('2') - 100/3*x(i))$(i.val eq 1) +
    (sqr(x(i)) + 5/4*x('1')*x(i) - 22.5*x(i))$(i.val eq 2);

cons(i)..
    sum(j, x(j)) =L= 15$(i.val eq 1) + 20$(i.val eq 2);

x.up(i) = 11;

model gnep / defobj, cons /;

file empinfo / '%emp.info%' /;
put empinfo 'equilibrium' /;
loop(i,
    put 'min', obj(i), x(i), defobj(i), cons(i) /;
);
putclose empinfo;

solve gnep using emp;
\end{lstlisting}

\subsubsection{MOPEC}
\label{subsubsec:exp-mopec}

We present a general equilibrium example in economics~\cite[Section
3]{mathiesen87} and model it as a MOPEC.
While~\cite{mathiesen87} formulated the problem as a
complementarity problem by using the closed form of the utility maximizing
demand function, we formulate it as a MOPEC by explicitly introducing a
utility-maximizing optimization agent (the consumer) to compute the demand.

Let us briefly explain the general equilibrium problem we consider.
We use the notations and explanation from~\cite{mathiesen87}.
There are three types of agents:
i) profit-maximizing producers;
ii) utility-maximizing consumers;
iii) a market determining the price of commodities based on production and
demand.
The problem is given with a technology matrix $A$, an initial endowment $b$,
and the demand function $d(p)$.
The coefficient $a_{ij} > 0$ (or $a_{ij} < 0)$ of $A$ indicates output (or
input) of commodity $i$ for each unit activity of producer $j$.
For a given price $p$, $d(p)$ is the demand of consumers maximizing their
utilities within their budgets, where budgets depend on the price $p$ and
initial endowment $b$.
Assuming that $y, x$, and $p$ represent activity of producers, demands of
consumers, and prices of commodities, respectively, we say that
$(y^*,x^*,p^*)$ is a general equilibrium if it satisfies the following:
\begin{equation}
  \begin{aligned}
    & \text{No positive profit for each activity} &&& -A^\tr p^* \ge 0,\\
    & \text{No excess demand}   &&& b + Ay^* - x^* \ge 0,\\
    & \text{Nonnegativity}      &&& p^* \ge 0, y^* \ge 0,\\
    & \text{No activity for earning negative profit}
    &&& (-A^\tr p^*)^\tr y^*=0,\\
    & \text{and positive activity implies balanced profit} ,\\
    & \text{Zero price for excess supply} &&& p^{*\tr}(b + Ay^* - x^*) =
    0,\\
    & \text{and market clearance for positive price,}\\
    & \text{Utility maximizing demand} &&& x^* \in \argmax_{x}
    \quad \text{utility}(x),\\
    &&&&\quad\quad \text{subject to} \quad p^{*\tr} x \le p^{*\tr} b.
  \end{aligned}
\end{equation}

We consider a market where there are a single producer, a single consumer, and
three commodities.
To compute the demand function without using its closed form, we introduce a
utility-maximizing consumer explicitly in the model.
Our GAMS/EMP model finds a solution $y^*=3, x^*=(3,2,0)^\tr, p^*=(6,1,5)^\tr$
for $\alpha=0.9$ that is consistent with the one in~\cite{mathiesen87}.

\begin{lstlisting}[caption={Implementation of the MOPEC within
GAMS/EMP},label={lst:exp-mopec},language=GAMS]
set i commodities / 1*3 /;

parameters ATmat(i)  technology matrix  / 1 1  , 2 -1 , 3 -1 /,
           s(i)      budget share       / 1 0.9, 2 0.1, 3 0 /,
           b(i)      endowment          / 1 0  , 2 5  , 3 3 /;

variable u    utility of the consumer;
positive variables y         activity of the producer,
                   x(i)      Marshallian demand of the consumer,
                   p(i)      prices;

equations mkt(i)    constraint on excess demand,
          profit    profit of activity,
          udef      Cobb-Douglas utility function,
          budget    budget constraint;

mkt(i)..
    b(i) + ATmat(i)*y - x(i) =G= 0;

profit..
    sum(i, -ATmat(i)*p(i)) =G= 0;

udef..
    u =E= sum(i, s(i)*log(x(i)));

budget..
    sum(i, p(i)*x(i)) =L= sum(i, p(i)*b(i));

model mopec / mkt, profit, udef, budget /;

file empinfo / '%emp.info%' /;
put empinfo 'equilibrium' /;
put 'max', u, 'x', udef, budget /;
* We have mkt perp p and profit perp y, the fourth and fifth conditions of (6).
put 'vi mkt p profit y' /;
putclose empinfo;

* The second commodity is used as a numeraire.
p.fx('2') = 1;
x.l(i) = 1;

solve mopec using emp;
\end{lstlisting}

\section{Modeling equilibrium problems with shared constraints}
\label{sec:shared-constr}

This section describes our first extension to model shared constraints and
to compute different types of solutions associated with them.

\subsection{Shared constraints and limitations of the existing framework}
\label{subsec:shared-constr-def}

We first define shared constraints in equilibrium problems, specifically when
they are explicitly given as equalities or inequalities.
\begin{definition}
  In equilibrium problems, if the same constraint, given explicitly as
  an equality or an inequality, appears multiple times in different agents'
  problem definitions, then it is a shared constraint.
  \label{def:shared-constr}
\end{definition}

For example, a constraint $h(x) \le 0$ (with no subscript $i$ on $h$) is a
shared constraint in the following GNEP:
\begin{example}
  Find $(x_1^*,\dots,x_N^*)$ satisfying
  \begin{equation*}
    \begin{aligned}
      &x_i^* \in &&\argmin_{x_i} && f_i(x_i,x_{-i}^*),\\
      &&&\text{ subject to } && g_i(x_i,x_{-i}^*) \le 0,\\
      &&&&&h(x_i,x_{-i}^*) \le 0, &&\text{for} \quad i=1,\dots,N.\\
    \end{aligned}
  \end{equation*}
  \label{ex:shared-constr}
\end{example}

Our definition of a shared constraint allows each agent's feasible region to
be defined with a combination of shared and non-shared constraints.
Our definition subsumes the cases in~\cite{facchinei07,facchinei10}, where
each agent's feasible region is defined by the shared constraint only: in that
situation there are no $g_i(x)$'s.
In our framework, the shared constraint can also be defined over some subset
of agents.
For expository ease throughout this section, we use Example
\ref{ex:shared-constr}, but the extension to the more general setting is
straightforward.

Shared constraints are mainly used to model shared resources among agents.
In the tragedy of commons example~\cite[Section 1.1.2]{nisan07}, agents share
a capped channel formulated as a shared constraint $\sum_{i=1}^N
x_i \le 1$.
Another example is the river basin pollution game
in~\cite{haurie97,krawczyk00}, where the total amount of pollutant thrown in
the river by the agents is restricted.
The environmental constraints are shared constraints in this case.
More details on how we model these examples can be found in
Section~\ref{subsec:exp-shared-constrs}.

There are two types of solutions when shared constraints are present.
Assume a suitable constraint qualification holds for each solution $x^*$
of Example~\ref{ex:shared-constr}.
Let $\mu^*_i$ be a multiplier associated with the shared constraint $h(x)$ for
agent $i$ at the solution $x^*$.
If $\mu_1^*=\cdots=\mu_N^*$, then we call the solution a
\textit{variational equilibrium}.
The name of the solution stems from the fact that if there are no $g_i(x)$'s,
then $x^*$ is a solution to the $\VI(X,F)$ and vice versa by
Proposition~\ref{prop:equilprob-vi}, where $X=\{x \in \real^n \mid h(x) \le
0\}$ and $h$ is a convex function.
In all other cases, we call a solution a GNEP equilibrium.

An interpretation from the economics point of view is that,
at a variational equilibrium, agents have the same marginal value on the
resources associated with the shared constraint (as the multiplier values are
the same), whereas at a GNEP equilibrium each agent may have a different
marginal value.

A shared constraint may not be easily modeled using the existing EMP
framework.
As each equation must be assigned to a single agent, we currently need to
create a replica of the shared constraint for each agent.
For Example~\ref{ex:shared-constr}, we may model it within GAMS/EMP as
follows:
\begin{lstlisting}[caption={Modeling the GNEP equilibrium via
replications},label={lst:gnep-shared-replication},language=GAMS]
variables obj(i), x(i);
equations deff(i), defg(i), defh(i);

model gnep_shared / deff, defg, defh /;

file empinfo / '%emp.info%' /;
put empinfo 'equilibrium' /;
loop(i,
  put 'min', obj(i), x(i), deff(i), defg(i), defh(i) /;
);
putclose empinfo;
\end{lstlisting}

In Listing~\ref{lst:gnep-shared-replication}, each {\tt defh(i)} is defined
exactly in the same way for all $i=1,\dots,N$: each of them is a replica of
the same equation.
This approach is neither natural nor intuitive compared to its algebraic
formulation.
It is also difficult to tell if the equation {\tt defh} is a shared constraint
by just reading the {\tt empinfo} file.
The information that {\tt defh} is a shared constraint is lost.
This could potentially prevent applying specialized solution methods, such
as the one in~\cite{schiro13}, for shared constraints.

Another difficulty lies in modeling the variational equilibrium.
To compute it, we need to have the multipliers associated with the shared
constraints the same among the agents.
Additional constraints may be required for such conditions to hold;
there is no easy way to force equality without changing the model using the
existing EMP framework.

\subsection{Extensions to model shared constraints}
\label{subsec:shared-constr-extension}

Our extensions have two new features:
i) we provide a syntactic enhancement that enables shared constraints to be
naturally and succinctly specified in a similar way to the algebraic formulation;
ii) we define a new EMP keyword that enables switching between the GNEP
and variational equilibrium without modifying each agent's problem definition.

To implement shared constraints, we modify Assumption~\ref{asm:original-emp}
as follows:

\begin{assumption}
  A model of an equilibrium problem described by equations and variables is
  assumed to have the following properties in the empinfo file:
  \begin{itemize}
  \item Each objective or \VI{} function of the model is owned by a single
    agent.
  \item Each constraint of the model is owned by at least one agent.
    If a constraint appears multiple times in different agents' problem
    definitions, then it is regarded as a shared constraint, and it is owned
    by these agents.
  \item Each variable is owned by a single agent.
  \end{itemize}
  \label{asm:shared-constr-emp}
\end{assumption}

Using Assumption~\ref{asm:shared-constr-emp}, we define shared constraints by
placing the same constraint in multiple agents' problems.
For example, we can model Example~\ref{ex:shared-constr} without replications
by changing lines 2 and 8-10 of Listing~\ref{lst:gnep-shared-replication} into
the following:
\begin{lstlisting}[caption={Modeling a shared constraint using a single
copy},label={lst:gnep-shared-noreplication},language=GAMS]
equation deff(i), defg(i), defh;

loop(i,
  put 'min', obj(i), x(i), deff(i), defg(i), defh /;
);
\end{lstlisting}

In Listing~\ref{lst:gnep-shared-noreplication}, a single instance of an
equation, \texttt{defh}, representing the shared constraint $h(x) \le 0$ is
created and placed in each agent's problem description.
Our framework then recognizes it as a shared constraint.
This is exactly the same way as its algebraic formulation is specified.
Also the \texttt{empinfo} file does not lose the problem structure: we can
easily identify that \texttt{defh} is a shared constraint by reading the
file as it appears multiple times.
To allow shared constraints, we need to specify \texttt{SharedEqu} in the
option file \texttt{jams.opt}.
Otherwise, multiple occurrences of the same constraint are regarded as an
error.
This is simply a safety check to stop non-expert users creating incorrect models.

In addition to the syntactic extension, we define a new EMP keyword
\texttt{visol} to compute a variational equilibrium associated with shared constraints.
By default, a GNEP equilibrium is computed if no \texttt{visol} keyword is
specified.
Hence Listing~\ref{lst:gnep-shared-noreplication} computes a GNEP
equilibrium.
If we place the following line in the \texttt{empinfo} file before the agents'
problem descriptions begin, that is, before line 3 in
Listing~\ref{lst:gnep-shared-noreplication}, then a variational equilibrium is
computed.
The keyword \texttt{visol} is followed by a list of shared constraints for
which each agent owning the constraint must use the same multiplier.

\begin{lstlisting}[caption={Computing a variational
equilibrium},label={lst:gnep-shared-visol},language=GAMS]
put 'visol defh' /;
\end{lstlisting}

Depending on the solution type requested, our framework creates different
\MCP{}s.
For a GNEP equilibrium, the framework replicates the shared constraint and
assigns a separate multiplier for each agent owning it.
For Example~\ref{ex:shared-constr}, the following $\MCP(z,F)$ is generated:
\begin{equation}
  \begin{aligned}
    &F(z) = ((F_i(z)^\tr)_{i=1}^N)^\tr, && z = ((z_i^\tr)_{i=1}^N)^\tr, \\
    &F_i(z) =
    \begin{bmatrix}
      \nabla_{x_i}f_i(x)-\nabla_{x_i}g_i(x)\lambda_i-\nabla_{x_i}h(x)\mu_i\\
      g_i(x)\\
      h(x)
    \end{bmatrix}, &&
    z_i =
    \begin{bmatrix}
      x_i\\
      \lambda_i \le 0\\
      \mu_i \le 0
    \end{bmatrix}, && \text{for} \quad i=1,\dots,N.
  \end{aligned}
  \label{eq:gnep-shared-mcp}
\end{equation}
Note that the same equation $h(\cdot)$ is replicated, and a separate
multiplier $\mu_i$ is assigned in~\eqref{eq:gnep-shared-mcp} for each agent
$i$ for $i=1,\dots,N$.

If a variational equilibrium is requested, then our framework creates a single
instance of the shared constraint, and a single multiplier is used for that
constraint among agents.
Accordingly, we construct the following $\MCP(z,F)$ for Example~\ref{ex:shared-constr}:
\begin{equation}
  \begin{aligned}
    &F(z)=((F_i(z)^\tr)_{i=1}^N, F_h(z)^\tr)^\tr, && z = ((z_i^\tr)_{i=1}^N,
    z_h^\tr)^\tr,\\
    &F_i(z)=
    \begin{bmatrix}
      \nabla_{x_i}f_i(x)-\nabla_{x_i}g_i(x)\lambda_i-\nabla_{x_i}h(x)\mu\\
      g_i(x)
    \end{bmatrix},
    && z_i=
    \begin{bmatrix}
      x_i\\\lambda_i \le 0
    \end{bmatrix}, && \text{for} \quad i=1,\dots,N,\\
    &F_h(z) =
    \begin{bmatrix}
      h(x)
    \end{bmatrix},
    && z_h =
    \begin{bmatrix}
      \mu \le 0
    \end{bmatrix}.
  \end{aligned}
  \label{eq:vi-shared-mcp}
\end{equation}

In~\eqref{eq:vi-shared-mcp}, a single multiplier $\mu$ is assigned to the
shared constraint $h(x)$, and $h(x)$ appears only once in the \MCP{}.
If there are no $g_i(x)$'s, then with a constraint qualification the problem
exactly corresponds to $\VI(X,F)$ of Proposition~\ref{prop:equilprob-vi} with
the set $X$ defined as $X:=\{x \mid h(x) \le 0\}$.

\subsection{Examples}
\label{subsec:exp-shared-constrs}

We present two GNEP examples having shared constraints in the following
sections, respectively.
The first example has a unique solution that is a variational equilibrium.
Thus, with or without the \texttt{visol} keyword, our framework computes the
same solution.
In the second example, multiple solutions exist.
Our framework computes solutions of different types depending on
the existence of the \texttt{visol} keyword in this case.

\subsubsection{GNEP with a shared constraint: tragedy of the commons}
\label{subsubsec:exp-gnep-shared-tragedy}

We consider the tragedy of the commons example~\cite[Section 1.1.2]{nisan07}:
\begin{equation}
  \begin{aligned}
    &\text{find} && (x_1^*,\dots,x_N^*) && \text{satisfying},\\
    &x_i^* \in && \argmax_{0 \le x_i \le 1} && x_i\left(1 -
      \left(x_i+\sum_{j=1, j\neq i}^Nx_j^*\right)\right),\\
    &&& \text{subject to} && x_i + \sum_{j=1,j\neq i}^N x_j^* \le 1.
  \end{aligned}
  \label{eq:exp-shared-ex1}
\end{equation}

There is a shared channel with capacity 1, represented as a shared constraint
$\sum_{j=1}^N x_j \le 1$, through which each agent $i$ sends $x_i$ units of
flow.
The value agent $i$ obtains by sending $x_i$ units is $x_i\left(1 -
  \sum_{j=1}^N x_j\right)$, and each agent tries to maximize its value.
By the form of the problem, \eqref{eq:exp-shared-ex1} is a GNEP with a shared
constraint.

The problem has a unique equilibrium $x_i^*=1/(N+1)$ for $i=1,\dots,N$.
The value of agent $i$ is then $1/(N+1)^2$, and the total value over all
agents is $N/(N+1)^2 \approx 1/N$.
As noted in~\cite{nisan07}, if agents choose to use $\sum_{i=1}^Nx_i = 1/2$,
then the total value will be 1/4 which is much larger than $1/N$ for large
enough $N$.
This is why the problem is called the tragedy of the commons.

We model~\eqref{eq:exp-shared-ex1} within GAMS/EMP in
Listing~\ref{lst:exp-shared-ex1}.
A single constraint \texttt{cap} is defined for the shared constraint,
and the same equation \texttt{cap} appears in each agent's problem
definition in the \texttt{empinfo} file.

\begin{lstlisting}[caption={Implementation of the
GNEP~\eqref{eq:exp-shared-ex1} within
GAMS/EMP},label={lst:exp-shared-ex1},language=GAMS]
$if not set N $set N 5

set i / 1*%N% /;
alias(i,j);

variables obj(i);
positive variables x(i);

equations defobj(i), cap;

defobj(i)..
    obj(i) =E= x(i)*(1 - sum(j, x(j)));

cap..
    sum(i, x(i)) =L= 1;

model m / defobj, cap /;

file info / '%emp.info%' /;
put info 'equilibrium' /;
loop(i,
    put 'max', obj(i), x(i), defobj(i), cap /;
);
putclose;

x.up(i) = 1;

* Specify SharedEqu option in the jams.opt file to allow shared constraints.
$echo SharedEqu > jams.opt
m.optfile = 1;

solve m using emp;
\end{lstlisting}

By default, a GNEP equilibrium is computed.
If we want to compute a variational equilibrium, we just need to place the
following line right after line 20 in Listing~\ref{lst:exp-shared-ex1}.
\begin{lstlisting}[language=GAMS]
put 'visol cap' /;
\end{lstlisting}

As the solution is unique $x_i^*=1/(N+1)$ with multiplier $\mu_i^*=0$ for
$i=1,\dots,N$, our framework computes the same solution in both cases.

\subsubsection{GNEP with shared constraints: river basin pollution game}
\label{subsubsec:exp-gnep-shared-river}

We present another example where we have different solutions for GNEP and
variational equilibria.
The example is the river basin example~\cite{haurie97,krawczyk00} described below:
\begin{equation}
  \begin{aligned}
    &\text{find} && (x_1^*,x_2^*,x_3^*) && \text{satisfying},\\
    &x_i^* \in &&\argmin_{x_i \ge 0} &&
    (c_{1i} + c_{2i}x_i)x_i - \left(d_1 - d_2\left(\sum_{j=1,j\neq i}^3 x^*_j
        + x_i\right)\right)x_i,\\
    &&&\text{subject to} &&
    \sum_{j=1,j\neq i}^3\left(u_{jm}e_jx_j^*\right) + u_{im}e_ix_i \le K_m,\\
    &&&&& \text{for} \quad m=1,2,\, i=1,2,3,\\
    &&&\text{where} && (c,d,e,u,K) \text{ is problem data}.
  \end{aligned}
  \label{eq:exp-shared-ex2}
\end{equation}
It has two shared constraints, and they are shared by all the three agents.

Let us briefly explain the model.
There are three agents near a river, each of which pursues maximum profit by
producing some commodities.
The term $(c_{1i}+c_{2i}x_i)x_i$ denotes the total cost of agent $i$, and
$(d_1 - d_2(\sum_{j=1,j\neq i}^3 x^*_j+ x_i))x_i$ is the revenue.
Each agent can throw pollutant in the river, but its amount is limited by
the two shared constraints in~\eqref{eq:exp-shared-ex2}.

Listing~\ref{lst:exp-shared-ex2} shows an implementation
of~\eqref{eq:exp-shared-ex2} within GAMS/EMP.
The two shared constraints are represented in the equations \texttt{cons(m)}.
We first compute a variational equilibrium.
A solution computed by our framework is $x^*=(21.145,16.028,2.726)$ with
multipliers $\mu_{\texttt{cons1}}^*=-0.574$ and $\mu_{\texttt{cons2}}^*=0$ for
the shared constraints \texttt{cons('1')} and \texttt{cons('2')},
respectively.\footnote{Note that we used the vector form for the constraints
  when we declare the equation \texttt{cons} for each agent in the
  \texttt{empinfo} file so that we do not have to loop through the set
  \texttt{m}.}

If we compute a GNEP equilibrium by deleting line 40 in
Listing~\ref{lst:exp-shared-ex2}, then we find a solution
$x^*=(0,6.473,22.281)$. In this case, multiplier values associated with the
shared constraints for each agent are as follows:
\begin{equation*}
  \begin{aligned}
    &\mu^*_{\texttt{cons1},1}=-0.804,\quad
    \mu^*_{\texttt{cons1},2}=-1.504,\quad \mu^*_{\texttt{cons1,3}}=-0.459,\\
    &\mu^*_{\texttt{cons2},1}=\mu^*_{\texttt{cons2},2}=\mu^*_{\texttt{cons2,3}}=0
  \end{aligned}
\end{equation*}

\begin{lstlisting}[caption={Implementation of~\eqref{eq:exp-shared-ex2} within
GAMS/EMP},label={lst:exp-shared-ex2},language=GAMS]
sets i / 1*3 /
     m / 1*2 /;
alias(i,j);

parameters
    K(m) / 1 100, 2 100 /
    d1   /   3     /
    d2   /   0.01  /
    e(i) / 1 0.5, 2 0.25, 3 0.75 /;

table c(m,i)
       1      2     3
  1  0.1   0.12  0.15
  2  0.01  0.05  0.01;

table u(i,m)
       1      2
  1  6.5    4.583
  2  5.0    6.250
  3  5.5    3.750;

variables obj(i);
positive variables x(i);

equations
    objdef(i)
    cons(m);

objdef(i)..
    obj(i) =E= (c('1',i) + c('2',i)*x(i))*x(i) - (d1 - d2*sum(j, x(j)))*x(i);

cons(m)..
    sum(i, u(i,m)*e(i)*x(i)) =L= K(m);

model m_shared / objdef, cons /;

file empinfo / '%emp.info%' /;
put empinfo 'equilibrium' /;
* Comment out the following line to compute a GNEP equilibrium.
put 'visol cons' /;
loop(i,
    put 'min', obj(i), x(i), objdef(i), 'cons' /;
);
putclose empinfo;

$echo SharedEqu > jams.opt
m_shared.optfile = 1;

solve m_shared using emp;

* Uncomment the code below to retrieve multipliers when a GNEP solution is computed.
* parameters cons_m(m,i);
* execute_load '%gams.scrdir%/ugdx.dat', cons_m=cons;
\end{lstlisting}

Note that since we only have a single constraint \texttt{cons} in the
modeling system, the lines 51-53 show how to recover a multiplier value for
each agent owning the shared constraint.

\section{Modeling equilibrium problems using shared
  variables}
\label{sec:impl}

In this section, we introduce {\it implicit variables} and their uses as
{\it shared variables}.
Roughly speaking, the values of implicit variables are implicitly defined by
other variable values.
Shared variables are implicit variables whose values are shared by multiple
agents.
For example, state variables controlled by multiple agents in economics, but
that need to have the same values across the problem, could be shared
variables.
In this case, our framework allows a single variable to represent such shared
variables.
This not only improves clarity of the model and facilitates deployment of
different mixed behavior models, but also provides a way of significantly
improving performance with efficient formulations.
In Section~\ref{subsec:shared-variable}, implicit variables and shared
variables are defined.
Section~\ref{subsec:mcp-formulations} presents various \MCP{} formulations for
them.
Finally, in Section~\ref{subsec:exp-impl}, we present examples of using shared
variables and experimental results comparing various \MCP{} formulations.

\subsection{Implicit variables and shared variables}
\label{subsec:shared-variable}

\begin{definition}
  We call a variable $y$ an implicit variable if for each $x$ there is at most
  one $y$ satisfying $(y,x) \in X$.
  Here the set $X$ is called the defining constraint of variable $y$.
  \label{def:implicit-variable}
\end{definition}

Note that Definition~\ref{def:implicit-variable} is not associated directly
with equilibrium problems.
It states that there exists one and only one implicit function $g(\cdot)$ such
that $(g(x),x) \in X$.
A simple example is $X=\{(y,x) \mid y=\sum_{i=1}^n x_i\}$.
We do not check for uniqueness however.
Our current implementation only allows the defining constraint $X$ to be
represented as a system of equations and the implicit variable $y$ to be
declared as a free variable.
They also need to be of the same size.
Constraints including bounds on variable $y$ can be introduced by explicitly
defining them in additional equations.
This is for allowing different solution types discussed in
Section~\ref{sec:shared-constr} to be associated with them.

Based on Definition~\ref{def:implicit-variable}, we define a shared variable.

\begin{definition}
  In equilibrium problems, variables $y_i$'s are shared variables if there is
  a set $X$ such that
  \begin{itemize}
  \item The feasible region of agent $i$ is given by
    \begin{equation}
      \begin{aligned}
        K_i(x_{-i}) &:= \{(y_i,x_i) \in \mathbb{R}^{n_y \times n_i}
        \mid (y_i,x_i) \in X_i(x_{-i}), (y_i,x_i,x_{-i}) \in X\},
        \text{ for }i=1,\dots,N.
      \end{aligned}
      \label{eq:shared-variable-replicate}
    \end{equation}
  \item $y_i$'s are implicit variables with the same defining constraint $X$.
  \end{itemize}
  \label{def:shared-variable}
\end{definition}

Basically, shared variables are implicit variables with
an additional condition that they have the same defining constraint.
One can easily verify that if $(y_1,\dots,y_N,x) \in K(x):=\prod_{i=1}^N
K_i(x_{-i})$, then $y_1=\cdots=y_N$, that is, variables $y_i$'s share their
values.
An extension to the case where they are shared by some subset of agents
is straightforward.

An equilibrium where shared variables $y_i$'s are present is defined as
follows:
\begin{equation}
  \begin{aligned}
    &\text{find} && (y^*,x_1^*,\dots,x_N^*,x_{N+1}^*)
    \quad \text{satisfying},\\
    &(y^*,x_i^*) && \in \argmin_{(y,x_i) \in K_i(x_{-i}^*)} \quad
    f_i(y,x_i,x_{-i}^*), \quad \text{for} \quad i=1,\dots,N,\\
    &x^*_{N+1} && \in \SOL(K_{N+1}(x_{-(N+1)}^*),F(\cdot, x_{-(N+1)}^*)).
  \end{aligned}
  \label{eq:equilprob-shared-var}
\end{equation}

Example~\ref{ex:shared-variable} presents the use of a shared variable
assuming that $y$ is an implicit variable with its defining constraint
$X:=\{(y,x) \mid H(y,x) = 0\}$.

\begin{example}
  The variable $y$ is a shared variable of the following equilibrium problem:
  \begin{equation*}
    \begin{aligned}
      &\text{find} && (y^*,x_1^*,\dots,x_N^*) && \text{satisfying},\\
      &(y^*,x_i^*) \in && \argmin_{y,x_i} && f_i(y,x_i,x^*_{-i}),\\
      &&&\text{subject to} && H(y,x_i,x_{-i}^*)=0, \text{ for }i=1,\dots,N,\\
      &&&\text{where} && H: \real^{m+n} \rightarrow \real^m, y \in \real^m
    \end{aligned}
  \end{equation*}
  \label{ex:shared-variable}
\end{example}

Listing~\ref{lst:impl-ex} presents GAMS code to model
Example~\ref{ex:shared-variable}.
We introduce a new keyword {\tt implicit} to declare an implicit variable and
its defining constraint.
The {\tt implicit} keyword is followed by a list of variables and
constraints, and our framework augments them to form a single vector of
implicit variables and its defining constraint.
It is required that the keyword should come first before any agent's problem
definition.
We can identify that $y$ is a shared variable in this case as it appears
multiple times in agents' problem definitions.
As the defining equation is assumed to belong to the implicit variable, we do
not place $H$ in each agent's problem definition (informally the variable $y$
owns $H$).

\begin{lstlisting}[caption={Modeling a shared
variable},label={lst:impl-ex},language=GAMS]
variables obj(i), x(i), y;
equations deff(i), defH;

model shared_implicit / deff, defH /;

file empinfo / '%emp.info%' /;
put empinfo 'equilibrium' /;
put 'implicit y defH' /;
loop(i,
   put 'min', obj(i), x(i), y, deff(i) /;
);
putclose empinfo;
\end{lstlisting}

Bounds on the shared variables can be introduced by explicitly defining them
in additional equations.
These bounds could change feasible region hence solutions of the problem.
For example, the following two-agent problem, each of which minimizes its
total cost, has bounds on the shared variable $y$, and its solution changes as
the value of the upper bound $b$ varies.
An implementation of~\eqref{eq:impl-ex-bounds} is available
at~\cite{youngdaeemp17}.
Our framework computes $(x_1^*,x_2^*)=(b/2,b/2)$ for $b \le 12$ and
$(x_1^*,x_2^*)=(6,6)$ for $b > 12$.
\begin{equation}
  \begin{aligned}
    &\text{find} && (y^*,x_1^*,x_2^*) && \text{satisfying},\\
    &(y^*,x_i^*) \in && \argmin_{x_i \ge 0, y} && x_i - x_i(10 -
    0.5*y),\\
    &&&\text{subject to}&& y = x_i + x_{-i}^*,\\
    &&&&& 0 \le y \le b, \,\,\text{for}\,\,i=1,2.
  \end{aligned}
  \label{eq:impl-ex-bounds}
\end{equation}

As we now allow shared variables, Assumption~\ref{asm:shared-constr-emp} needs
to be modified as follows:
\begin{assumption}
  A model of an equilibrium problem described by equations and variables is
  assumed to have the following properties in the empinfo file:
  \begin{itemize}
  \item Each \VI{} function of the model is owned by a single agent.
    Each objective function of the model is owned by at least one agent.
    The objective function can be owned by multiple agents when its objective
    variable is declared as an implicit variable.
  \item Each constraint of the model is owned by at least one agent.
    If a constraint appears multiple times in different agents' problem
    definitions, then it is regarded as a shared constraint owned by these
    agents.
  \item Each variable of the model is owned by at least one agent except for
    an implicit variable.
    If a variable appears multiple times in different agents' problem
    definition, then it is regarded as a shared variable owned by these
    agents, and it must be an implicit variable.
    If there is a variable not owned by any agent, then it must be an implicit
    variable.
  \end{itemize}
  \label{asm:shared-variable}
\end{assumption}

Note that in Assumption~\ref{asm:shared-variable} we allow missing ownership
for an implicit variable as its value is well-defined via its defining
constraint once the values of other variables are set.
When the ownership is not specified for an implicit variable, our framework
creates a \VI{} agent that owns the variable and its defining constraint: $H$
becomes a \VI{} function, and $y$ is its matching variable in
Example~\ref{ex:shared-variable}.
This turns out to be especially useful to model mixed behavior as described in
Section~\ref{subsubsec:mixed-behavior}.

\subsection{Various \MCP{} formulations for shared variables}
\label{subsec:mcp-formulations}

This section describes various \MCP{} formulations for shared variables.
For clarity, we will use Example~\ref{ex:shared-variable} to demonstrate our
formulations throughout this section.
Each formulation in
Sections~\ref{subsubsec:replicate}-\ref{subsubsec:substitute} shares the same
GAMS code of Listing~\ref{lst:impl-ex}.
Different formulations can be obtained by specifying an appropriate value
for the option \texttt{ImplVarModel} in the file {\tt jams.opt}.
In Section~\ref{subsec:exp-impl}, we present experimental results comparing
the sizes and performance of these formulations.

\subsubsection{Replicating shared variables for each agent}
\label{subsubsec:replicate}

In this reformulation, we replicate each shared variable for each agent owning
it and compute the corresponding \MCP{}.
For Example~\ref{ex:shared-variable}, our framework creates a variable $y_i$
for agent $i$, that is a replication of variable $y$, then computes the KKT
conditions.
The following $\MCP(z,F)$ is formulated by collecting these KKT conditions:
\begin{equation}
  \begin{aligned}
    &F(z) =
    \begin{bmatrix}
      (F_i(z)^\tr)_{i=1}^N
    \end{bmatrix}^\tr,
    && z=
    \begin{bmatrix}(z_i^\tr)_{i=1}^N
    \end{bmatrix}^\tr,\\
    &F_i(z) =
    \begin{bmatrix}
      \nabla_{x_i}f_i(x,y) - (\nabla_{x_i}H(y,x))\mu_i\\
      \nabla_{y_i}f_i(x,y) - (\nabla_{y_i}H(y,x))\mu_i\\
      H(y_i,x)
    \end{bmatrix},
    && z_i =
    \begin{bmatrix}
      x_i\\
      y_i\\
      \mu_i
    \end{bmatrix}.
  \end{aligned}
  \label{eq:impl-ex-repl-mcp}
\end{equation}

The size of~\eqref{eq:impl-ex-repl-mcp} is $(n+2mN)$ where the
first term is from $n=\sum_{i=1}^N|x_i|$ and the second one is from
$N\times(|y_i|+|\mu_i|)$ with $|y_i|=|\mu_i|=m$ for each $i=1,\dots,N$.
Note that the same constraints $H$ and shared variable $y$ are replicated $N$
times.
Table~\ref{tbl:impl-mcp-size} summarizes the sizes of the \MCP{} formulations
depending on the strategy.
\eqref{eq:impl-ex-repl-mcp} can be obtained by specifying an option
{\tt ImplVarModel=Replication} in {\tt jams.opt}.

\subsubsection{Switching shared variables with multipliers}
\label{subsubsec:switching}

We introduce a switching strategy that does not require replications.
The switching strategy uses the fact that in an \MCP{} we can exchange
free variables of the same size in the complementarity conditions without
changing solutions.
For example, if an \MCP{} is given by
\begin{equation*}
  \begin{aligned}
    \begin{bmatrix}
      F_1(z)\\
      F_2(z)
    \end{bmatrix}
    && \perp &&
    \begin{bmatrix}
      z_1\\
      z_2
    \end{bmatrix},
  \end{aligned}
\end{equation*}
where $z_i$'s are free variables, then a solution to the \MCP{} is a
solution to the following \MCP{} and vice versa:
\begin{equation*}
  \begin{aligned}
    \begin{bmatrix}
      F_1(z)\\
      F_2(z)
    \end{bmatrix}
    && \perp &&
    \begin{bmatrix}
      z_2\\
      z_1
    \end{bmatrix}.
  \end{aligned}
\end{equation*}

Applying the switching technique to shared variables, we switch each
shared variable with the multipliers associated with its defining equations.
This is possible because each shared variable is a free variable and its
defining equations are of the same size as the shared variable.
As a by-product, we do not have to replicate the shared variables and their
defining constraints.
Thus we can reduce the size of the resultant \MCP{}.

The $\MCP(z,F)$ obtained by applying the switching technique to
Example~\ref{ex:shared-variable} is as follows:
\begin{equation}
  \begin{aligned}
    &F(z) =
    \begin{bmatrix}
      (F_i(z)^\tr)_{i=1}^N, F_h(z)^\tr
    \end{bmatrix}^\tr,
    && z=
    \begin{bmatrix}(z_i^\tr)_{i=1}^N, z_h^\tr
    \end{bmatrix}^\tr,\\
    &F_i(z) =
    \begin{bmatrix}
      \nabla_{x_i}f_i(x,y) - (\nabla_{x_i}H(y,x))\mu_i\\
      \nabla_{y}f_i(x,y) - (\nabla_{y}H(y,x))\mu_i
    \end{bmatrix},
    && z_i =
    \begin{bmatrix}
      x_i\\
      \mu_i
    \end{bmatrix},\\
    &F_h(z) =
    \begin{bmatrix}
      H(y,x)
    \end{bmatrix},
    && z_h =
    \begin{bmatrix}
      y
    \end{bmatrix}.
  \end{aligned}
  \label{eq:impl-ex-switching-mcp}
\end{equation}

The size of~\eqref{eq:impl-ex-switching-mcp} is $(n+mN+m)$.
Note that compared to the replication strategy the size is reduced by
$(N-1)m$.
The number $(N-1)m$ exactly corresponds to the number of additional
replications of the shared variable $y$.
The formulation can be obtained by specifying an option
{\tt ImplVarModel=Switching} in {\tt jams.opt}.
This is currently the default value for \texttt{ImplVarModel}.

\begin{table}[t]
  \centering
  \begin{tabular}{|c|c|}
    \hline
    Strategy                & Size of the \MCP{}\\\hline
    replication             & $(n+2mN)$  \\
    switching               & $(n+mN+m)$ \\
    substitution (implicit) & $(n+nm+m)$ \\
    substitution (explicit) & $(n+m)$    \\
    \hline
  \end{tabular}
  \caption{The size of the \MCP{}s containing shared variables of
    Example~\ref{ex:shared-variable}}
  \label{tbl:impl-mcp-size}
\end{table}

\subsubsection{Substituting out multipliers}
\label{subsubsec:substitute}

We can apply our last strategy when the implicit function theorem holds for
the defining constraints.
By the implicit function theorem, we mean for $(\bar{y},\bar{x})$ satisfying
$H(\bar{y},\bar{x})=0$ there exists a unique continuously differentiable
function $h:\real^n \rightarrow \real^m$ that maps into some neighborhood of
$\bar{y}$ such that $H(h(x),x)=0$ for all $x$ in some neighborhood of
$\bar{x}$.

In a single optimization problem with $H$ taking the special form,
$H(y,x)=y-h(x)$, a similar definition was made in the AMPL modeling system,
and the variable $y$ is called a \textit{defined variable} in this
case~\cite[See A.8.1]{fourer02}.

The basic idea is to regard the shared variable $y$ as a function of other
non-shared variables and apply the total derivative.
At each solution $(y^*,x^*)$ of the problem, there exists a locally defined
implicit function $h_{x^*}(x)$ such that $y^*=h_{x^*}(x^*)$ and
$H(h_{x^*}(x),x)=0$ for each $x$ in some neighborhood of $x^*$ by the implicit
function theorem.
We can then remove variable $y$ by replacing it with the implicit function
$h_{x^*}(x)$ near $(y^*,x^*)$.
Thus the objective function $f_i(x_i,x_{-i},y)$ of agent $i$ on the feasible
set $H(y,x)=0$ near $(y^*,x^*)$ can be equivalently represented as
$f_i(x_i,x_{-i},h_{x^*}(x))$.
Consequently, the KKT conditions near $(y^*,x^*)$ only involve variable $x$:
\begin{equation*}
  \begin{aligned}
    \frac{d}{dx_i}f_i(x_i,x_{-i},h_{x^*}(x)) &=
    \nabla_{x_i}f_i(x_i,x_{-i},h_{x^*}(x)) +
    \nabla_{x_i}h_{x^*}(x)\nabla_{y}f_i(x_i,x_{-i},h_{x^*}(x)),\\
    y&=h_{x^*}(x),
  \end{aligned}
\end{equation*}
where $d/dx_i$ represents the total derivative with respect to variable $x_i$.

By the implicit function theorem, we have
\begin{equation*}
  \nabla_{x_i}h_{x^*}(x) = - \nabla_{x_i}H(y,x)\nabla_yH(y,x)^{-1}.
\end{equation*}
Therefore the KKT conditions of agent $i$'s problem of
Example~\ref{ex:shared-variable} can be represented as follows:
\begin{equation}
  \begin{aligned}
    &0 = \nabla_{x_i}f_i - \nabla_{x_i}H(\nabla_y H)^{-1}\nabla_yf_i
    &&\perp&& x_i && \text{free}, && \text{for} \quad i=1,\dots,N,\\
    &0 = H(y,x) &&\perp&& y && \text{free},
  \end{aligned}
  \label{eq:impl-ex-kkt-total-derivative}
\end{equation}
where we also applied the switching technique in
Section~\ref{subsubsec:switching}.

We can derive the same formulation~\eqref{eq:impl-ex-kkt-total-derivative}
from another perspective.
At a solution $(y^*,x^*,\mu^*)$ to the problem, the matrix
$\nabla_yH(y^*,x^*)$ is non-singular by the implicit function theorem.
Thus we have
\begin{equation}
  \begin{aligned}
    0 = \nabla_y f_i(x_i^*,x_{-i}^*,y^*) - (\nabla_yH(y^*,x^*))\mu^*_i
    &&\Longrightarrow&& \mu_i^* = (\nabla_y
    H(y^*,x^*))^{-1}\nabla_yf_i(x_i^*,x_{-i}^*,y^*).
  \end{aligned}
  \label{eq:impl-ex-kkt-mu-substitute}
\end{equation}
We can then substitute out every occurrence of $\mu_i$ by the right-hand side
of~\eqref{eq:impl-ex-kkt-mu-substitute} and remove the left-hand side from
consideration.
The result is the formulation~\eqref{eq:impl-ex-kkt-total-derivative}.

A critical issue with applying the
formulation~\eqref{eq:impl-ex-kkt-total-derivative} is that in general we do
not have the explicit algebraic representation of $(\nabla_yH)^{-1}$.
Computing it explicitly may be quite expensive and could cause numerical
issues.

Instead of explicitly computing it, we introduce new variables $\Lambda_i$ to
replace $\nabla_{x_i}H(\nabla_yH)^{-1}$ with a system of equations:
\begin{equation*}
  \begin{aligned}
    \Lambda_i \nabla_yH(y,x) = \nabla_{x_i}H(y,x), && \text{for} \quad
    i=1,\dots,N.
  \end{aligned}
\end{equation*}
One can easily verify that for each solution $(y^*,x^*)$
to~\eqref{eq:impl-ex-kkt-total-derivative} there exists $\Lambda_i^*$
satisfying the following and vice versa:
\begin{equation}
  \begin{aligned}
    &0 = \nabla_{x_i}f_i - \Lambda_i\nabla_yf_i
    &&\perp&& x_i && \text{free},\\
    &0 = \Lambda_i\nabla_yH - \nabla_{x_i}H
    &&\perp&& \Lambda_i && \text{free}, && \text{for} \quad i=1,\dots,N\\
    &0 = H(y,x) &&\perp&& y && \text{free}.
  \end{aligned}
  \label{eq:impl-ex-kkt-substitute-implicit}
\end{equation}
Consequently, the following $\MCP(z,F)$ is formulated in this case:
\begin{equation}
  \begin{aligned}
    &F(z) =
    \begin{bmatrix}
      (F_i(z)^\tr)_{i=1}^N, F_h(z)^\tr
    \end{bmatrix}^\tr,
    && z=
    \begin{bmatrix}(z_i^\tr)_{i=1}^N, z_h^\tr
    \end{bmatrix}^\tr,\\
    &F_i(z) =
    \begin{bmatrix}
      \nabla_{x_i}f_i(x,y) - \Lambda_i\nabla_yf_i(x,y)\\
      \Lambda_i\nabla_yH(y,x) - \nabla_{x_i}H(y,x)
    \end{bmatrix},
    && z_i =
    \begin{bmatrix}
      x_i\\
      \Lambda_i
    \end{bmatrix},\\
    &F_h(z) =
    \begin{bmatrix}
      H(y,x)
    \end{bmatrix},
    && z_h =
    \begin{bmatrix}
      y
    \end{bmatrix}.
  \end{aligned}
  \label{eq:impl-ex-substitute-implicit-mcp}
\end{equation}

The size of~\eqref{eq:impl-ex-substitute-implicit-mcp} is $(n+mn+m)$.
This could be much larger than the one obtained when we apply the switching
strategy, whose size is $(n+mN+m)$, because we usually have $n \gg N$.
Comparing the size to the case where we replicate the implicit variables, we
have $(n+nm+m) \le (n+2mN)$ if and only if $N \ge (n+1)/2$.

The size of the substitution strategy can be significantly reduced when the
shared variable is \textit{explicitly} defined, that is, $H(y,x)=y-h(x)$.
In this case, the algebraic representation of $(\nabla_yH)^{-1}$ is in a
favorable form: an identity matrix.
We do not have to introduce new variables and their corresponding system of
equations.
As we know the explicit algebraic formulation of $\nabla_{x_i}H$, the
following \MCP{} is formulated:
\begin{equation}
  \begin{aligned}
    &F(z) =
    \begin{bmatrix}
      (F_i(z)^\tr)_{i=1}^N, F_h(z)^\tr
    \end{bmatrix}^\tr,
    && z=
    \begin{bmatrix}(z_i^\tr)_{i=1}^N, z_h^\tr
    \end{bmatrix}^\tr,\\
    &F_i(z) =
    \begin{bmatrix}
      \nabla_{x_i}f_i(x,y) - \nabla_{x_i}H(y,x)\nabla_yf_i(x,y)
    \end{bmatrix},
    && z_i =
    \begin{bmatrix}
      x_i
    \end{bmatrix},\\
    &F_h(z) =
    \begin{bmatrix}
      H(y,x)
    \end{bmatrix},
    && z_h =
    \begin{bmatrix}
      y
    \end{bmatrix}.
  \end{aligned}
  \label{eq:impl-ex-substitute-explicit-mcp}
\end{equation}

Note that the size of~\eqref{eq:impl-ex-substitute-explicit-mcp} is $(n+m)$.
This is a huge saving compared to other formulations.
Our framework automatically detects if a shared variable is given in the
explicit form and substitutes out the multipliers if it is.
Otherwise, \eqref{eq:impl-ex-substitute-implicit-mcp} is formulated.
The formulation can be obtained by specifying an option
{\tt ImplVarModel=Substitution} in {\tt jams.opt}.

\subsection{Examples}
\label{subsec:exp-impl}

In this section, we introduce three models that use shared variables.
Section~\ref{subsubsec:sparsity} describes an example where we can reduce its
density significantly by introducing a shared variable.
This enables the problem, previously known as computationally intractable,
to be efficiently solved.
Section~\ref{subsubsec:epec} presents an EPEC model where each agent tries to
maximize its welfare in the Nash way while trading goods with other agents
subject to general equilibrium conditions.
The general equilibrium conditions define a set of state variables that are
shared by all agents.
We can then use the constructs for shared variables to define the state
variables.
In Section~\ref{subsubsec:mixed-behavior}, we present an example of modeling
mixed pricing behavior of agents.
More examples on using shared variables, for example modeling shared objective
functions, can be found at~\cite{youngdaeemp17}.
All experiments were performed on a Linux machine with Intel(R) Core(TM)
i5-3340M CPU@2.70 GHz processor and 8GB of memory.
\PATH{} was set to use the UMFPACK~\cite{umfpack} as its basis computation
engine.
%In Section~\ref{subsubsec:shared-objective}, we present an example of
%computing a saddle point of the Lagrangian of a convex optimization problem
%using a shared variable.
%By defining the Lagrangian as a shared objective variable, we can easily
%formulate the saddle point problem.

\subsubsection{Improving sparsity using a shared variable}
\label{subsubsec:sparsity}

We consider an oligopolistic energy market equilibrium
example~\cite[Section 4]{luna14} formulated as a GNEP.
We show that its sparsity can be significantly improved by introducing a
shared variable, which makes the problem, known as computationally
intractable in~\cite{luna14}, solvable.
The example is defined as follows:
\begin{equation}
  \begin{aligned}
    & \text{find} && (q^*_0,q^*_1,\dots,q^*_5) && \text{satisfying},\\
    &q_0^* \in && \argmax_{0 \le q_0 \le U_0} &&
    p\left(\sum_{i=1}^5\sum_{k=1}^{n_i}q^*_{ik}\right)
    \left(\sum_{i=1}^5\sum_{k=1}^{n_i}q^*_{ik}\right)
    - \sum_{i=1}^5 c_i(q^*_i) - Pq_0,\\
    &&&\text{subject to} && q_0 + \sum_{i=1}^5\sum_{k=1}^{n_i}q^*_{ik} = d,\\
    &q_i^* \in && \argmax_{0 \le q_i \le U_i} &&
    p\left(\sum_{j=1,j\neq i}^5\sum_{k=1}^{n_j}q^*_{jk} +
      \sum_{k=1}^{n_i}q_{ik}\right)\left(\sum_{k=1}^{n_i}q_{ik}\right)
    - c_i(q_i),\\
    &&&\text{subject to} && q_0^* + \sum_{j=1,j\neq
      i}^5\sum_{k=1}^{n_j}q^*_{jk} + \sum_{k=1}^{n_i}q_{ik} = d,\\
    &&&\text{where} && c_i(q_i) = \frac{1}{2}q_i^\tr M_iq_i + b_i^\tr q_i,\\
    &&&&& p(Q) := \left(\frac{-P}{(1.5d)^2}Q^2 + P\right),\\
    &&&&& (P,d,M_i,b_i,U_i,n_i) \text{ is problem data},
    \quad \text{for} \quad i=1,\dots,5.
  \end{aligned}
  \label{eq:exp-luna}
\end{equation}

Let us briefly describe~\eqref{eq:exp-luna}.
There are six agents.
The first agent is an ISO agent which controls variable $q_0 \in \real$
measuring deficit of energy.
It tries to maximize the total profit of all the energy supplying agents less
the penalty caused by being unable to meet the fixed demand $d$.
The parameter $P$ represents how much penalty we put on the deficit $q_0$.
Each agent $i$, controlling $q_i=(q_{i1},\dots,q_{in_i})$ for $i=1,\dots,5$,
is a profit-maximizing agent that produces homogeneous energy generated from its $n_i$ number of plants.
Its decision variable $q_{ik}$ denotes the amount of energy produced from its
$k$th plant for $k=1,\dots,n_i$.
The function $p(Q)$ is a concave inverse demand function, and
$c_i(q_i)$ is the total cost of producing energy $\sum_{k=1}^{n_i}q_{ik}$.
The matrix $M_i$ is a diagonal matrix having positive diagonal entries, hence
$c_i(\cdot)$ is a strongly convex function.
All the six agents share the same demand constraint
$q_0+\sum_{i=1}^5\sum_{k=1}^{n_i}q_{ik}=d$; it is a shared constraint.
We use $n$, $n=\sum_{i=1}^5 n_i$, to denote the total number of plants, and
each energy-producing agent has the same number of plants, $n_i=n/5$ for
$i=1,\dots,5$.

In~\cite{luna14}, a variational equilibrium was computed by formulating a VI
and solving it using \PATH{}.
The paper reported that \PATH{} started to get much slower for the problem of
size $n=2,500$, and it was not able to solve problems of sizes
$n=5,000$ and $n=10,000$ due to out of memory error.

We have observed that the memory error was due to the high density of the
Jacobian matrix of the \MCP{}: it was almost 100\% for all problems.
Consequently, the \MCP{} will have a large number of nonzero entries
requiring a huge amount of memory.
Also the linear algebra computation (required by \PATH{} for basis
computations) time will be much slower in this case.

The root cause of such a highly dense Jacobian matrix was because of the term
$\sum_{i=1}^5\sum_{k=1}^{n_i} q_{ik}$ in the price function
$p(\cdot)$: for each $q_{ik}$, the term $\partial p(\cdot)/ \partial
  q_{ik}$ has all the variables $q_{i'k'}$.
We can make the problem much sparser by introducing a shared variable
$z:=\sum_{i=1}^5\sum_{k=1}^{n_i} q_{ik}$.
Mathematically, the problem is defined as follows:
\begin{equation}
  \begin{aligned}
    & \text{find} && (z^*,q^*_0,q^*_1,\dots,q^*_5) && \text{satisfying},\\
    &q_0^* \in && \argmax_{0 \le q_0 \le U_0} &&
    p(z^*)z^* - \sum_{i=1}^5 c_i(q^*_i) - Pq_0,\\
    &&&\text{subject to} && q_0 + z^* = d,\\
    &(z^*,q_i^*) \in && \argmax_{z,0 \le q_i \le U_i} &&
    p(z)\left(\sum_{k=1}^{n_i}q_{ik}\right) - c_i(q_i),\\
    &&&\text{subject to} && q_0^* + z = d,\\
    &&&&& z = \sum_{j=1,j\neq i}^5\sum_{k=1}^{n_j}q^*_{jk} +
    \sum_{k=1}^{n_i}q_{ik}.
  \end{aligned}
  \label{eq:exp-luna-shared}
\end{equation}

Listing~\ref{lst:exp-luna} implements~\eqref{eq:exp-luna-shared}.
We used the \texttt{visol} keyword to compute a variational equilibrium.
We formulate each agent's problem as a minimization problem by flipping the sign of its objective function.
Therefore, each agent $i$'s objective function for $i=1,\dots,5$ is strongly
convex, and the ISO agent's objective function is linear.

\begin{lstlisting}[caption={Implementation of~\eqref{eq:exp-luna-shared} using
a shared variable within GAMS/EMP},label={lst:exp-luna},language=GAMS]
$if not set n $set n 100
$if not set num_agents $set num_agents 5
$eval num_plants %n%/%num_agents%
$set P 120
sets i	/ 1*%num_agents% /
     k	/ 1*%num_plants% /;
alias(i,j);

variables iso_obj, agent_obj(i), z;
positive variables  q0, q(i,k);
equations iso_defobj, agent_defobj(i), demand, defz;
parameters U0, U(i,k), M(i,k), b(i,k), d, a;

U0 = 5;
U(i,k) = uniform(0,10);
M(i,k) = uniform(0.4,0.8);
b(i,k) = uniform(30,60);
d = 0.8 * sum((i,k), U(i,k));
a = -%P% / (1.5 * d)**2;

q0.up = U0;
q.up(i,k) = U(i,k);
q.l(i,k) = 0.8*U(i,k);
z.l = sum((i,k), q.l(i,k));

iso_defobj..
    iso_obj =E= %P%*q0
    + sum(i, 0.5*sum(k, M(i,k)*q(i,k)*q(i,k)) + sum(k, b(i,k)*q(i,k)))
    - (a*sqr(z) + %P%)*z;

agent_defobj(i)..
    agent_obj(i) =E=
    0.5*sum(k, M(i,k)*q(i,k)*q(i,k)) + sum(k, b(i,k)*q(i,k))
    - (a*sqr(z) + %P%)*sum(k, q(i,k));

demand..
    q0 + z =E= d;

defz..
    z =E= sum((i,k), q(i,k));

model m_oligop / iso_defobj, agent_defobj, demand, defz /;

file empinfo / '%emp.info%' /;
put empinfo 'equilibrium' /;
put 'implicit z defz' /;
put 'visol demand' /;
put 'min', iso_obj, q0, iso_defobj, demand /;
loop(i,
    put 'min', agent_obj(i);
    loop(k, put q(i,k););
    put z, agent_defobj(i), demand /;
);
putclose empinfo;

$echo SharedEqu > jams.opt
m_oligop.optfile = 1;

solve m_oligop using emp;
\end{lstlisting}

\begin{table}[t]
  \begin{subtable}{\textwidth}
    \centering
    \begin{tabular}{|r|r|r|r|r|r|r|}
      \hline
      \multicolumn{1}{|c|}{\multirow{2}{*}{$n$}}
      & \multicolumn{2}{c|}{Original}
      & \multicolumn{2}{c|}{Switching}
      & \multicolumn{2}{c|}{Substitution}\\\cline{2-7}
      & \multicolumn{1}{c|}{Size} & \multicolumn{1}{c|}{Density (\%)}
      & \multicolumn{1}{c|}{Size} & \multicolumn{1}{c|}{Density (\%)}
      & \multicolumn{1}{c|}{Size} & \multicolumn{1}{c|}{Density (\%)}\\\hline
      2,500 &   2,502 & 99.92 &  2,508 & 0.20 &  2,503 & 20.07\\
      5,000 &   5,002 & 99.96 &  5,008 & 0.10 &  5,003 & 20.04\\
      10,000 & 10,002 & 99.98 & 10,008 & 0.05 & 10,003 & 20.02\\
      25,000 &      - &     - & 25,008 & 0.02 &      - &     -\\
      50,000 &      - &     - & 50,008 & 0.01 &      - &     -\\
      \hline
    \end{tabular}
    \caption{\MCP{} model statistics when we have 1 ISO agent and 5
      energy-producing agents}
    \label{subtbl:exp-luna-orig-stat}
  \end{subtable}

  \vspace{2mm}
  \begin{subtable}{\textwidth}
    \centering
    \begin{tabular}{|r|r|r|r|r|r|r|}
      \hline
      \multicolumn{1}{|c|}{\multirow{2}{*}{$n$}}
      & \multicolumn{2}{c|}{Original}
      & \multicolumn{2}{c|}{Switching}
      & \multicolumn{2}{c|}{Substitution}\\\cline{2-7}
      & \multicolumn{1}{c|}{(Major,Minor)} & \multicolumn{1}{c|}{Time (secs)}
      & \multicolumn{1}{c|}{(Major,Minor)} & \multicolumn{1}{c|}{Time (secs)}
      & \multicolumn{1}{c|}{(Major,Minor)} & \multicolumn{1}{c|}{Time (secs)}
      \\\hline
       2,500  & (2,2639) &  57.78 &  (1,2630) &   1.30 &  (1,2630) &  13.18\\
       5,000  & (2,5368) & 420.92 &  (1,5353) &   5.83 &  (1,5353) &  91.01\\
      10,000  &        - &      - & (1,10517) &  22.01 & (1,10517) & 652.03\\
      25,000  &        - &      - & (1,26408) & 148.08 &         - &      -\\
      50,000  &        - &      - & (1,52946) & 651.14 &         - &      -\\
      \hline
    \end{tabular}
    \caption{Performance comparison when we have 1 ISO agent and 5
      energy-producing agents}
    \label{subtbl:exp-luna-orig-perf}
  \end{subtable}

  \vspace{2mm}
  \begin{subtable}{\textwidth}
    \centering
    \begin{tabular}{|r|r|r|r|r|}
      \hline
      \multicolumn{1}{|c|}{\multirow{2}{*}{$n$}}
      & \multicolumn{2}{c|}{Switching}
      & \multicolumn{2}{c|}{Substitution}\\\cline{2-5}
      & \multicolumn{1}{c|}{Size} & \multicolumn{1}{c|}{Density (\%)}
      & \multicolumn{1}{c|}{Size} & \multicolumn{1}{c|}{Density (\%)}
      \\\hline
       2,500  &  3,753 & 0.12 &  2,503 & 0.20 \\
       5,000  &  7,503 & 0.06 &  5,003 & 0.10 \\
      10,000  & 15,003 & 0.03 & 10,003 & 0.05 \\
      25,000  & 37,503 & 0.01 & 25,003 & 0.02 \\
      50,000  & 75,003 & 0.01 & 50,003 & 0.01 \\
      \hline
    \end{tabular}
    \caption{\MCP{} model statistics when we have 1 ISO agent and $n/2$
      energy-producing agents}
    \label{subtbl:exp-luna-sub-stat}
  \end{subtable}

  \vspace{2mm}
  \begin{subtable}{\textwidth}
    \centering
    \begin{tabular}{|r|r|r|r|r|}
      \hline
      \multicolumn{1}{|c|}{\multirow{2}{*}{$n$}}
      & \multicolumn{2}{c|}{Switching}
      & \multicolumn{2}{c|}{Substitution}\\\cline{2-5}
      & \multicolumn{1}{c|}{(Major,Minor)} & \multicolumn{1}{c|}{Time (secs)}
      & \multicolumn{1}{c|}{(Major,Minor)} & \multicolumn{1}{c|}{Time (secs)}
      \\\hline
       2,500  &  (1,2650) &   1.43 &  (1,2650) &   0.88 \\
       5,000  &  (1,5359) &   5.89 &  (1,5359) &   3.61 \\
      10,000  & (1,10526) &  25.05 & (1,10526) &  15.70 \\
      25,000  & (1,26400) & 176.94 & (1,26400) & 107.45 \\
      50,000  & (1,52950) & 800.75 & (1,52950) & 471.51 \\
      \hline
    \end{tabular}
    \caption{Performance comparison when we have 1 ISO agent and $n/2$
      energy-producing agents}
    \label{subtbl:exp-luna-sub-perf}
  \end{subtable}

  \caption{Model statistics and performance comparison of~\eqref{eq:exp-luna}
    using \PATH{}}
  \label{tbl:exp-luna}
\end{table}

Table~\ref{tbl:exp-luna} describes the statistics and performance
of~\eqref{eq:exp-luna} over various sizes of plants and agents.\footnote{The
  replication strategy is not allowed in this case as it is ambiguous what
  variable to replicate for the ISO agent: the agent uses shared variable $z$,
  but it does not own it. Our solver automatically detects this case and
  generates an error.}
The '-' symbol represents that we were not able to obtain the results
because of memory issue.
In Tables~\ref{subtbl:exp-luna-orig-stat} and~\ref{subtbl:exp-luna-orig-perf},
we used the same setup as in~\cite{luna14}.
First, note that the \MCP{} size of the original formulation was the
smallest, but it had the highest density.
This resulted in a computationally intractable model for large $n \ge
10,000$.
In contrast, using a shared variable and the switching strategy, we were able
to generate much sparser models and consequently to solve all of them.
However, the substitution strategy suffered a similar issue:
its high density generated computationally intractable models for $n=25,000$
and $50,000$.
This was due to the total derivative computation.
The term $\sum_{ik}q_{ik}$ remained in each component of the \MCP{} function
$F_i \in \mathbb{R}^{n_i}$ for each agent $i$.
This resulted in a block diagonal Jacobian matrix consisting of 5 100\% dense
blocks of size $n_i \times n_i$ for $i=1,\dots,5$.

To see the effect of many agents, we generated problems where each agent now
has 2 plants.
Thus for a given $n$ there are $n/2$ number of energy-producing agents.
Tables~\ref{subtbl:exp-luna-sub-stat} and~\ref{subtbl:exp-luna-sub-perf}
report the model statistics and performance comparison of the switching and
substitution strategies.
We did not report experimental results using the original formulation as the
\MCP{} size and the density of its Jacobian matrix were the same as before.
In this case, the substitution strategy showed the best performance.
Its Jacobian matrix was still block diagonal consisting of $n/2$ blocks, but
each block size was just $2 \times 2$.
This improved the sparsity of the model significantly.
The \MCP{} size of the switching strategy was much larger than that of the
substitution as its size is proportional to the number of agents (see
Table~\ref{tbl:impl-mcp-size}).
This made the strategy two times slower than the substitution strategy.

\subsubsection{Modeling equilibrium problems with equilibrium constraints}
\label{subsubsec:epec}

We construct an EPEC model\footnote{The original model was written by Thomas
  Rutherford, and was solved by applying the diagonalization method
  (Gauss-Seidel) to the nonlinear problem~\eqref{eq:epec} by fixing
  $t$ variable values belonging to other agents.
  We modified the model to use our EMP framework, and it was subsequently
  solved by \PATH{}.}
where data was taken from the GTAP (Global Trade Analysis Project) 9
database~\cite{aguiar16}.
The model is an exchange model having 23 agents (countries) where each agent
tries to maximize its welfare with respect to economic variables
(equivalently, state variables) and its strategic policy variables in the Nash
way while trading goods with other agents subject to the general equilibrium conditions.
Mathematically, the model is represented as follows:
\begin{equation}
  \begin{aligned}
    &\text{find} && (w^*,z^*,t^*) && \text{satisfying},\\
    &(w^*,z^*,t_i^*) \in && \argmax_{w,z,t_i \in T_i} && w_i,\\
    &&&\text{subject to} && H(w,z,t)=0,\\
    &&&&& \text{for} \quad i=1,\dots,23,
  \end{aligned}
  \label{eq:epec}
\end{equation}
where $w_i$ is a welfare index variable of agent $i$, $z$ is a vector of
endogenous economic variables such as prices, quantities, and so on, $t_i$
represents a vector of strategic policy variables of agent $i$ that determines
the tariffs on the imports, and $H(\cdot): \real^{253 \times 506} \rightarrow
\real^{253}$ is a system of nonlinear equations that represents the general
equilibrium conditions.

A distinguishing feature of the model is that the state variables $(w,z)$ are
shared by the agents, and their values are implicitly determined by the
general equilibrium conditions.
This implies that $(w,z)$ are shared variables, and the function $H$ is their
defining constraint.
In this case, $(w,z)$ are not given as an explicit function of $t$ in $H$.

In Table~\ref{tbl:epec}, we present experimental results of the three
formulations over various problem sizes by changing the number of agents.
The size of $H$ changes accordingly.
We use the replication strategy as a baseline to compare the size and
performance of the \MCP{} models.
We do not describe the implementation within GAMS/EMP as the number of lines
is long.
Refer to~\cite{youngdaeemp17} for the implementation.

\begin{table}[t]
  \centering
  \begin{tabular}{|r|r|r|r|r|r|r|}
    \hline
    \multicolumn{1}{|c|}{\multirow{2}{*}{\# Agents}}
    & \multicolumn{2}{c|}{Replication}
    & \multicolumn{2}{c|}{Switching}
    & \multicolumn{2}{c|}{Substitution} \\\cline{2-7}
    & Size & Density (\%) & Size & Density (\%) & Size & Density (\%) \\\hline
     5 &     570 & 1.66 & 350 & 3.34
       &   1,230 & 0.77 \\
    10 &   2,290 & 0.72 & 1,300 & 1.70
       &  10,210 & 0.14 \\
    15 &   5,160 & 0.50 & 2,850 & 1.28
       &  35,190 & 0.06 \\
    20 &   9,180 & 0.40 & 5,000 & 1.10
       &  84,420 & 0.03 \\
    23 &  12,144 & 0.37 & 6,578 & 1.03
       & 129,030 & 0.02 \\
    \hline
  \end{tabular}

  \vspace{5mm}
  \begin{tabular}{|r|r|r|r|r|r|r|}
    \hline
    \multicolumn{1}{|c|}{\multirow{2}{*}{\# Agents}}
    & \multicolumn{2}{c|}{Replication}
    & \multicolumn{2}{c|}{Switching}
    & \multicolumn{2}{c|}{Substitution} \\\cline{2-7}
    & (Major,Minor) & Time (secs) & (Major,Minor) & Time (secs)
    & (Major,Minor) & Time (secs)\\\hline
     5 & (18,164) &  0.33 & (18,173) & 0.22 &      (11,29) &   0.38 \\
    10 & (17,279) &  1.52 & (17,301) & 1.48 &     (15,436) &   8.14 \\
    15 &   (8,22) &  1.81 &   (8,22) & 1.68 &  (129,19806) & 814.73 \\
    20 &   (9,28) &  4.90 &   (9,28) & 4.73 &     (13,461) & 104.00 \\
    23 &   (9,41) & 10.07 &   (9,41) & 8.02 &    (20,1451) & 368.99 \\
    \hline
  \end{tabular}
  \caption{\MCP{} model statistics and performance comparison of the EPEC
    model}
  \label{tbl:epec}
\end{table}

In all settings, the switching strategy was of the smallest \MCP{} size
as it did not replicate or create any variables and equations.
Consequently, it showed the best performance in terms of the elapsed time: it
was up to 6 times faster than the replication strategy and 50
times\footnote{We did not include the 15-agent problem in the comparison as
  we think the slowest performance of the substitution strategy is due to some
  numerical difficulties \PATH{} encountered.} than the substitution strategy.
Although it performed more number of iterations on the problem having 10
agents, its time was still faster than that of the replication strategy.
We believe that the smaller problem size led to faster linear algebra
computation.

The substitution strategy was of the largest problem size and showed the slowest elapsed time.
The large size was due to the newly introduced variables and equations as
described in Table~\ref{tbl:impl-mcp-size}.
Although the density of it was the smallest, the number of nonzero entries was
the largest.
Hence linear algebra computation became much slower.

\subsubsection{Modeling mixed behavior: price-taking and
  price-making agents}
\label{subsubsec:mixed-behavior}

In this example, we show that mixed behavior of firms, switching between
price-takers and price-makers, can be easily modeled using a shared
variable.
We revisit the oligopolistic market equilibrium problem in
Section~\ref{subsubsec:exp-nash}.
Previously, the market was an oligopolistic market where all the firms were
price-makers: they can directly affect the price by changing their
productions.
If they have no control over the price, they become price-takers, that is, the
price is an exogenous variable for each firm.
In this case, the market is perfect competitive.

Listing~\ref{lst:exp-mixed} implements our mixed behavior model.
We introduce an implicit variable $z$ that represents the price $p(Q)$ defined
in~\eqref{eq:exp-nash}.
If a firm has ownership of variable $z$, then it becomes a price-maker as it
has a direct control of it.
Otherwise, it is a price-taker.
The first solve on line 32 computes a competitive market equilibrium.
As no firms have ownership of variable $z$, they are all price-takers in this
case.
After the first solve, we compute five different mixed models where firms
having indices less than or equal to $j$ are price-makers at the $j$th mixed
model for $j=1,\dots,5$.

\begin{lstlisting}[caption={Implementation of mixed behavior of agents within
GAMS/EMP},label={lst:exp-mixed},language=GAMS]
sets i agents / 1*5 /;
alias(i,j);

parameters c(i)    / 1  10, 2   8, 3   6, 4   4, 5   2 /
           K(i)    / 1   5, 2   5, 3   5, 4   5, 5   5 /
           beta(i) / 1 1.2, 2 1.1, 3 1.0, 4 0.9, 5 0.8 /;

variables obj(i), z;
positive variables q(i);

equations defobj(i), defz;

defobj(i)..
    obj(i) =e= q(i)*z - (c(i)*q(i) + beta(i)/(beta(i)+1)*K(i)**(-1/beta(i))*q(i)**((beta(i)+1)/beta(i)));

defz..
    z =e= 5000**(1.0/1.1)*sum(i, q(i))**(-1.0/1.1);

model mixed / defobj, defz /;

file empinfo / '%emp.info%' /;
put empinfo 'equilibrium' /;
put 'implicit', z, defz /;
loop(i,
    put 'max ', obj(i), q(i), defobj(i) /;
);
putclose empinfo;

q.l(i) = 10;
z.l = sum(i, q.l(i));

solve mixed using emp;

parameter objval(i,*), qval(i,*), pval(*), totalobjval(*), socialwelfare(*);

objval(i,'competitive') = obj.l(i);
qval(i,'competitive') = q.l(i);
pval('competitive') = z.l;
totalobjval('competitive') = sum(i, objval(i,'competitive'));
socialwelfare('competitive') = (5000**(1.0/1.1)*11*sum(i, q.l(i))**(0.1/1.1)
    - z.l*sum(i, q.l(i))) + totalobjval('competitive');

set kind / oligo1, oligo12, oligo123, oligo1234, oligo12345 /;

loop(kind,
    put empinfo 'equilibrium' /;
    put 'implicit', z, defz /;
    loop(i,
        if (i.val le ord(kind),
            put 'max ', obj(i), q(i), z, defobj(i) /;
        else
            put 'max ', obj(i), q(i), defobj(i) /;
        );
    );
    putclose empinfo;

    q.l(i) = 10;
    z.l = sum(i, q.l(i));

    solve mixed using emp;

    objval(i,kind) = obj.l(i);
    qval(i,kind) = q.l(i);
    pval(kind) = z.l;
    totalobjval(kind) = sum(i, objval(i,kind));
    socialwelfare(kind) = (5000**(1.0/1.1)*11*sum(i, q.l(i))**(0.1/1.1)
        - z.l*sum(i, q.l(i))) + totalobjval(kind);
);
\end{lstlisting}

Table~\ref{tbl:exp-mixed} presents firms' profits and social welfare of
various mixed models.
We computed social welfare by adding the consumer surplus to the total profit
of the firms.
The consumer surplus was computed by integrating the inverse demand function
less the amount paid by the consumer.
In columns starting with ``Oligo'', indices of firms that are price-makers
are attached to it.
Thus Oligo123 implies that firms with index 1, 2, or 3 are price-makers, and
others are price-takers.
As expected, i) the total profit of the firms was the smallest in the
competitive case and the largest in the oligopolistic case;
ii) each firm made more profit as it switched from price-taker to
price-maker;
iii) the social welfare was the maximized when all firms were price-takers.
Interestingly, switching from a price-taker to a price-maker of a firm
made profits of other firms increase much larger than the one of itself.
A similar observation was made in~\cite{harker88} and was explained as an
externality effect.

\begin{table}[t]
  \centering
  \begin{tabular}{|r|r|r|r|r|r|r|}
    \hline
    Profit & Competitive & Oligo1 & Oligo12 & Oligo123 & Oligo1234
    & Oligo12345 \\\hline
    Firm 1 & 123.834 & 125.513 & 145.591 & 167.015 & 185.958 & 199.934 \\
    Firm 2 & 195.314 & 216.446 & 219.632 & 243.593 & 264.469 & 279.716 \\
    Firm 3 & 257.807 & 278.984 & 306.174 & 309.986 & 331.189 & 346.590 \\
    Firm 4 & 302.863 & 322.512 & 347.477 & 373.457 & 376.697 & 391.279 \\
    Firm 5 & 327.591 & 344.819 & 366.543 & 388.972 & 408.308 & 410.357
    \\\hline
    Total profit & 1207.410 & 1288.273 & 1385.417 & 1483.023 & 1566.621
    & 1627.875\\
    Social welfare & 39063.824 & 39050.191 & 39034.577 & 39022.469 & 39016.373
    & 39015.125\\
    \hline
  \end{tabular}
  \caption{Profits of the firms and social welfare of various mixed models of
    Listing~\ref{lst:exp-mixed}}
  \label{tbl:exp-mixed}
\end{table}

\section{Modeling quasi-variational inequalities}
\label{sec:qvi}

This section introduces a new construct for specifying QVIs within our
framework and presents an example comparing two equivalent ways of defining
the equilibrium problems in either GNEP or QVI form.

\subsection{Specifying quasi-variational inequalities using our framework}
\label{subsec:qvi-specification}

Assuming that the feasible region of a $\QVI(K,F)$ takes the form $K(x)
:= \{ y \in \mathbb{R}^n \mid h(y,x) = 0, g(y,x) \le 0\}$,
Listing~\ref{lst:qvi} shows a generic way of specifying the $\QVI(K,F)$
using our framework.
In this case, we call $x$ a parameter variable and $y$ a variable of
interest.
Parameter variables could appear in the constraints, however, the \QVI{}
function $F$ must be defined by only variables of interest.

\begin{lstlisting}[caption={Modeling the QVI},label={lst:qvi},language=GAMS]
variables x(i), y(i);
equations defF(i), defh, defg;

* Definitions of defF(i), defh, and defg are omitted for expository purposes.

model qvi / defF, defh, defg /;

file empinfo / '%emp.info%' /;
putclose empinfo 'qvi defF y x defh defg';

solve qvi using emp;
\end{lstlisting}

To specify QVIs, the \texttt{empinfo} file starts with a new keyword
\texttt{qvi}.
The syntax is similar to the one for \VI{}s as described in
Section~\ref{subsec:underlying-assumptions} except that additional variables
could follow right after each function-variable pair.
In this case, these additional variables become parameter variables, and the
size of them must be the same as the size of variables of interest in the
preceding pair.
Our framework then constructs matching information between parameter
variables and variables of interest.
(The same applies to each preceding variable that is assigned to a zero
function.)
Therefore, in Listing~\ref{lst:qvi}, variables $y$ and $x$ are the variable of
interest and the parameter variable, respectively, and each $x_i$ is matched with $y_i$.
When our framework formulates the corresponding \MCP{}, for each constraint it
takes the derivative with respect to $y$, and each occurrence of $x_i$ is
replaced with $y_i$ using the matching information.
Note that if there are no parameter variables, that is, no variables follow
each function-variable pair and each preceding variable, then the problem
becomes a \VI{}.
In the above case, the feasible region is a fixed set, $K(x) := K$, specified
by \texttt{defh} and \texttt{defg}.

\subsection{Example}
\label{subsec:qvi-example}

We consider the following $\QVI(K,F)$ example in \cite[page 14]{outrata95}:
\begin{equation}
  \begin{aligned}
    &F(y) &&=
    \begin{bmatrix}
      -\frac{100}{3} + 2y_1 + \frac{8}{3}y_2\\
      -22.5 + \frac{5}{4}y_1 + 2y_2
    \end{bmatrix},\\
    &K(x) &&= \{ 0 \le y \le 11 \mid y_1 + x_2 \le 15, x_1 + y_2 \le 20\}
  \end{aligned}
  \label{eq:exp-qvi}
\end{equation}

Listing~\ref{lst:qvi} describes an implementation of~\eqref{eq:exp-qvi}.
As in~\eqref{eq:exp-qvi}, we use $x$ as a parameter variable in the
implementation.
The implementation is a natural translation of its algebraic form so that
users can focus on the QVI specification itself.
Also the {\tt empinfo} file retains information about variable types so that
we can easily identify which variables are parameter variables and which are
variables of interest.
This information can be potentially exploited for the efficient implementation
of solution methods for QVIs.
Our framework computes a solution $x^*=(10,5)$ that is consistent with the one
reported in~\cite{outrata95}.

\begin{lstlisting}[caption={Implementation of~\eqref{eq:exp-qvi} within
GAMS/EMP},label={lst:qvi},language=GAMS]
sets i / 1*2 /;
alias(i,j);

parameter A(i,j);
A('1','1') = 2;
A('1','2') = 8/3;
A('2','1') = 5/4;
A('2','2') = 2;

parameter b(i);
b('1') = 100/3;
b('2') = 22.5;

parameter Cy(i,j), Cx(i,j);
Cy(i,j)$(sameas(i,j)) = 1;
Cx(i,j)$(not sameas(i,j)) = 1;

parameter rhs(i) / 1 15, 2 20 /;

variables y(j), x(j);
equations F(i), g(i);

F(i)..
    sum(j, A(i,j)*y(j)) - b(i) =N= 0;

g(i)..
    sum(j, Cy(i,j)*y(j)) + sum(j, Cx(i,j)*x(j)) - rhs(i) =L= 0;

model qvi / F, g /;

file empinfo / '%emp.info%' /;
putclose empinfo 'qvi F y x g';

* If bounds on y and x are different, then an intersection of them is taken.
y.lo(j) = 0; y.up(j) = 11;
x.lo(j) = 0; x.up(j) = 11;

solve qvi using emp;
\end{lstlisting}

One can easily check that the \QVI{}~\eqref{eq:exp-qvi} is equivalent to the
GNEP~\eqref{eq:exp-gnep} in Section~\ref{subsubsec:exp-gnep} in terms of
solutions.
Actually, all the equilibrium examples described in previous sections can be
equivalently formulated as QVIs in the manner of Proposition~\ref{prop:equilprob-qvi}.

However, the information provided to our framework could be different
depending on the formulations.
The GNEP formulation~\eqref{eq:exp-gnep} gives us each agent's information:
its objective function and ownership of variables and constraints.
It may not be easy to recover this information from the QVI formulation.
In general, we can collect more information from an equilibrium formulation.
This could result in different solutions methods such as a Gauss-Seidel method
and its variants, while it may not be possible to collect similar information from the QVI formulation.
Therefore, for equilibrium problems, it may be better to not use the QVI
formulation.
Since our QVI framework is not just limited to QVIs derived from
equilibrium problems, it can be used to explicitly model other types of QVIs
with possible specialized algorithms for solution.

\section{Conclusions}
\label{sec:conclusions}

We have presented an extended mathematical programming framework for
equilibrium programming.
The framework defines a new set of constructs that enable equilibrium problems
with shared constraints and shared variables and their variational forms to be
specified in modeling languages.
Its syntax is a natural translation of the corresponding algebraic formulation
of the problem that captures high-level structure.
This allows more readable and less error prone models to be specified
compared to the traditional complementarity based models that require
the derivative computation of the Lagrangian by hand.
Different solution types such as variational equilibria associated with
shared constraints can be easily specified and computed using our framework.
We define shared variables and their associated constructs that can be used
to model sparse formulations, some forms of EPECs, price-taking and
price-making agents, shared objective functions, and so on.
We introduce a new construct for specifying QVIs.

There is potential for future work.
Using the high-level information captured by our framework, we can design
decomposition algorithms to solve large-scale equilibrium problems
that may involve a huge number of agents.
We intend to allow implicit variables to be defined using nonsmooth
equations~\cite{robinson13}.
We plan to extend our framework to incorporate equilibrium problems including
agents solving stochastic programs, bilevel programming, other forms of
EPECs, all with consideration of shared constraints and shared variables, and
to implement the EMP in other modeling systems such as AMPL and Julia.

\bibliographystyle{spmpsci}   % mathematics and physical sciences
\bibliography{emp}       % name your BibTeX data base

\end{document}